\newcommand{\Q}{\mathbb{Q}}
\newcommand{\R}{\mathbb{R}}
\newcommand{\C}{\mathbb{C}}
\newcommand{\Cp}{\C^{\times}}
\newcommand{\N}{\mathbb{N}}
\newcommand{\T}{\mathbb{T}}
\newcommand{\8}{\infty}
\newcommand{\spa}{\mathrm{span}}
\newcommand{\Co}{\mathcal{C}}
\newcommand{\Ho}{\mathcal{H}}
\newcommand{\Int}{\mathrm{int}}
\newcounter{dummy} \numberwithin{dummy}{section}
\newtheorem{theorem}[dummy]{Theorem}
\newtheorem{lemma}[dummy]{Lemma}
\newtheorem{proposition}[dummy]{Proposition}
\newtheorem{corollary}[dummy]{Corollary}
\newtheorem{question}[dummy]{Question}
\theoremstyle{remark}
\newtheorem{remark}[dummy]{Remark}
\newtheorem{example}[dummy]{Example}
\begin{document}

\title{Invariant Hermitean Metric Generated By Reproducing Kernel Hilbert Spaces (Preliminary Version)}
\author{Eugene Bilokopytov\footnote{Email address bilokopi@myumanitoba.ca.}}
\maketitle

\begin{abstract}
For a Reproducing Kernel Hilbert Space on a complex domain we give a formula that describes the Hermitean metrics on the domain which are pull-backs of some metric on the (dual of) the RKHS via the evaluation map. Then we consider the question when such metrics are invariant with respect to the group of automorphisms of the domain. First we approach the problem by considering a stronger property, demanding that the original metric on the (dual of) the RKHS is invariant with respect to all (adjoints of) composition operators, induced by  automorphisms. However, we show that only the trivial metric satisfies this property. Then we characterise RKHS's for which the Bergman metric analogue studied in \cite{cd} and \cite{arsw} is automorphism-invariant.

\emph{Keywords:} Reproducing Kernel Hilbert Spaces; Hermitean metrics; unitary-invariance; automorphism-invariance;

MSC2010 32A36, 46E22, 47B32, 53B35.
\end{abstract}

\section{Introduction}

Geometric objects are largely characterized by their symmetries. Moreover, by geometry of an object we mean the prescribed class of symmetries. For example, the symmetries of Hilbert spaces are the unitaries, the symmetries of complex manifolds are the biholomorphisms and the symmetries of Hermitean manifolds are biholomorphims that are also isometries. In this article we will establish a connection between these three classes. In particular, we will consider a classic problem of endowing a given complex manifold with a Hermitean or Kahler metric, such that any automorphism of the manifold is automatically an isometry with respect to the chosen metric.

A well-known construction of such \emph{invariant} metric for bounded domains in $\C^{n}$ is due to Stefan Bergman (see \cite{berg}). An intermediate tool for doing that was the \emph{Bergman space}, which is the space of all square-integrable holomorphic functions on the domain. Note that the latter space is not an invariant object, since it depends on the Lebesgue measure, which is not biholomorphism-invariant. Later, Kobayashi in his celebrated paper \cite{kob} extended the definition of Bergman metric to complex manifolds. The construction was essentially the same, but the space of functions was replaced with the space of differential forms, which only depends on the holomorphic structure of the manifold, and so is automorphism-invariant. Kobayashi has also introduced another invariant distance, which is simpler to calculate explicitly. The latter distance received more attention in the works of Skwarczy\'nski (\cite{sk}). Meanwhile, Bergman's approach towards spaces of functions contributed to the early development of a new branch of Analysis - \emph{Reproducing Kernel Hilbert Space} (RKHS) Theory.

It is possible to replace the Bergman space in the aforementioned constructions with an arbitrary RKHS. In \cite{cd} this idea was used to construct a unitary invariant of multiplication with the free variable on the domains in $\C$ (see also \cite{am}). In \cite{arsw} Arcozzi, Rochberg, Sawyer and Wick did a thorough analysis of the metrics that were obtained this way. Rochberg then was able (see \cite{rochberg}) to solve a certain embedability problem using these metrics. However the applicability of his approach is limited, since the metric construction only "matches" some of the RKHS's. To overcome this obstacle one can try to find a "matching" metric for any RKHS.

One of the principal results in RKHS theory can be interpreted in the following way: any abstract function of two variables that has certain property of an inner product induces an embedding of that set into a Hilbert space. The embedding is unique up to unitary equivalence. Hence, if our set has a structure of a smooth manifold, we can pull back a unitary-invariant metric on a Hilbert space in a unique way through that embedding. We find a formula for this pull-back in terms of the kernel of the space (Theorem \ref{main}) and discuss the correspondence between the spaces and the metrics. Namely, we study possibilities of constructing an automorphism-invariant metric on a complex domain "from" a given RKHS.

Let us describe the content of the paper. Section \ref{prel} consists of preliminary material, including some notations concerning abstract functions of two variables, necessary basics of RKHS theory, geometry of complex domains and rigidity of sesqui-holomorphic functions. It also contains several results on unitary-invariant Hermitean metrics on Hilbert spaces taken from \cite{erz}, which we will adapt to the case when the Hilbert space is a dual of a RKHS in the next section.

Section \ref{pull} contains our main result, Theorem \ref{main}, which gives an analytic description of all Hermitean metrics on a domain in $\C^{n}$ which are pull-backs of unitary-invariant metrics from (the duals of) RKHS's. The section proceeds with the first attempt of finding an automorphism-invariant Hermitean metric among these metrics. We obtain a negative result, Proposition \ref{no}, which says that we cannot expect a unitary-invariant Hermitean metric on the dual of a RKHS to be invariant with respect to all composition operators induced by automorphisms.

The metrics used in \cite{cd} and \cite{arsw} appear in Section \ref{pull} as an example and are the main subject in Section \ref{proj}. It was stated in \cite{arsw} without a proof that these metrics determine the RKHS up to a "rescalling", and we give a proof of this fact using the rigidity result mentioned above. As a consequence we get that these metrics are automorphism-invariant if and only if the reproducing kernel under the consideration is projectively invariant with respect to the action of the group of automorphisms of the domain. This circumstance provides a connection between the present work and Operator theory and Representation theory. Namely, we give a description of projectively invariant kernels in Theorem \ref{prig} and discuss operations with such kernels in the next proposition. Proposition \ref{mrig} tells that a projectively invariant kernel is "almost" determined by its multipliers. This result was taken from \cite{bb}, but there it is established for Bergman-like spaces. The section concludes with an application of this proposition to weighted Bergman spaces.

The Section \ref{sup} is of complementary nature. First we discuss possible generalizations of Lemma \ref{no1} and Proposition \ref{mrig} and pose some questions related to these results. Further, we present the proof of Proposition \ref{wu} and conclude with a version of Theorem \ref{main} for the field of real numbers.

\section{Preliminaries}\label{prel}

Let $\Cp=\C\backslash\left\{0\right\}$ and let $\T=\left\{\lambda\in\C,\left|\lambda\right|=1\right\}$. We start with some notations and conventions regarding (abstract) functions of two variables. Let $X$ be a set. Consider the simplest functions of two variables: if $\omega,\upsilon:X\to\C$, define $\omega\otimes\upsilon:X\times X\to \C$ by $\left[\omega\otimes\upsilon\right]\left(y,x\right)=\omega\left(x\right)\upsilon\left(y\right)$. On the other hand, consider the transition from a function of two variables, to the function of one variable: if $L:X\times X\to \C$, define the \emph{diagonal function} $\widehat{L}:X\to \C$ by $\widehat{L}\left(x\right)=L\left(x,x\right)$, for $x\in X$. In fact, $\widehat{L}=L\circ j_{X}$, where $j_{X}:X\to X\times X$ is defined by $j_{X}\left(x\right)=\left(x, x\right)$. If $\Phi:X\to X$ define $L\circ\Phi:X\times X\to \C$ by $\left[L\circ\Phi\right]\left(x,y\right)=L\left(\Phi\left(x\right),\Phi\left(y\right)\right)$, for $x,y\in X$. Note that $\widehat{L\circ\Phi}=\widehat{L}\circ\Phi$. We will say that $L$ is \emph{non-degenerate} if $\widehat{L}\left(x\right)>0$ for all $x\in X$. This term will be justified in the next subsection.

We will say that a non-degenerate function $M:X\times X\to \C$ is a \emph{rescalling} of a non-degenerate function $L:X\times X\to \C$ if there is a function $\omega:X\to\C$, such that $M=\omega\otimes\overline{\omega}L$. In this case we will also say that $\omega$ \emph{rescales} $L$ to $M$. Note that $\lambda\omega$ also rescales $L$ to $M$ for any $\lambda\in\T$. Clearly $\left|\omega\right|^{2}=\frac{\widehat{M}}{\widehat{L}}$, and so the relation of being a rescalling is symmetric; in fact this is an equivalence relation. Hence, we will say that $L$ and $M$ are rescallings if one of them is a rescalling of another. Finally note that if $L_{1},M_{1}$ and $L_{2},M_{2}$ are rescallings, then $L_{1}L_{2}$ and $M_{1}M_{2}$ are also rescallings.\\

\textbf{RKHS.} Let $H$ be an inner product space and let $h\in H$. By $h^{*}$ we will denote the continuous linear functional on $H$ that acts by $\left<g,h^{*}\right>=\left<g,h\right>_{H}$. The correspondence $*$ is an antilinear isometry from $H$ into its dual $H^{*}$. We can define an inner product on $H^{*}$ as the continuous extension of the push-forward $\left<g^{*},h^{*}\right>_{H^{*}}=\left<h,g\right>_{H}$; this inner product agrees with the natural norm on $H^{*}$. If we further assume the completeness of $H$ i.e. that $H$ is a Hilbert space, then Riesz theorem ensures that $*$ is a bijection, and so it has an inverse which is also an antilinear isometry. Let $^{*}l$ be the element of $H$, such that $\left(^{*}l\right)^{*}=l$, for $l\in H^{*}$.

Let $X$ be a set and let $F$ be a linear subspace of the space of all complex-valued functions on $X$, equipped with an inner product, turning $F$ into a Hilbert space. Then, $F$ is called a \emph{Reproducing Kernel Hilbert Space (RKHS)} over $X$ if there is a function (\emph{reproducing kernel}) $K:X\times X\to \C$, such that $K_{x}=K\left(\cdot,x\right)\in F$, for each $x\in X$, and $f\left(x\right)=\left<f,K_{x}\right>_{F}$, for each $f\in F$.

Defining $\kappa\left(x\right)=K_{x}^{*}$, we can see that $F$ is a RKHS if and only if there is an \emph{evaluation map} $\kappa:X\to F^{*}$, such that $\overline{\spa~\kappa\left(X\right)}=F^{*}$ and $f\left(x\right)=\left<f,\kappa\left(x\right)\right>$, for each $x\in X$ and $f\in F$; moreover,  $K\left(x,y\right)=\left<\kappa\left(x\right),\kappa\left(y\right)\right>_{F^{*}}$. Note that $K$ is conjugate-symmetric, i.e. $K\left(x,y\right)=\overline{K\left(y,x\right)}$, for any $x,y\in X$; also\\ $\widehat{K}\left(x\right)=\|\kappa\left(x\right)\|^{2}\ge 0$, for any $x\in X$, and the equality is attained precisely at the points of $X$ where all elements of $F$ vanish. Hence, $K$ is non-degenerate if and only if there is no null point evaluations.

A \emph{positive semi-definite} function (kernel) on $X$ is a function $K:X\times X\to \C$ such that for any $x_{1},...,x_{n}\in X$, the matrix $\left(K\left(x_{i},x_{j}\right)\right)_{i,j=1}^{n}$ is positive semi-definite. It is easy to see that pointwise limits of positive semi-definite kernels are positive semi-definite and, by Schur's Theorem, a product of positive semi-definite kernels is positive semi-definite. In particular, if $K$ is positive semi-definite, then so is $K^{n}$, for any $n\in\N$. Note however, that $K^{s}$ is not necessarily positive semi-definite if $s\not\in\N$. It is easy to see that if $\omega:X\to \C$, then $\omega\otimes\overline{\omega}$ is positive semi-definite. Thus, any rescalling of a positive semi-definite kernel is positive semi-definite. The importance of the established class of functions is revealed by the following theorem (see \cite{aron}).

\begin{theorem}[Moore-Aronszajn]
Let $X$ be a set. A function\\ $K:X\times X\to \C$ is positive semi-definite if and only if there is a (unique) RKHS over $X$ with reproducing kernel equal to $K$.
\end{theorem}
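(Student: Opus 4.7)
The plan is to establish both implications separately, then handle uniqueness.

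For the ``only if'' direction, assume $F$ is a RKHS with kernel $K$. Given $x_{1},\ldots,x_{n}\in X$ and scalars $c_{1},\ldots,c_{n}\in\C$, I would form $f=\sum_{i}c_{i}K_{x_{i}}\in F$ and expand
\[
0\leq \|f\|_{F}^{2}=\sum_{i,j}c_{i}\overline{c_{j}}\langle K_{x_{i}},K_{x_{j}}\rangle_{F}=\sum_{i,j}c_{i}\overline{c_{j}}K(x_{j},x_{i}),
\]
using $K(x,y)=\langle K_{y},K_{x}\rangle_{F}$. Since $K$ is conjugate-symmetric, the matrix $(K(x_{i},x_{j}))_{i,j}$ is Hermitian, so PSDness of its transpose transfers to the matrix itself.

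For the ``if'' direction, I would run the classical Moore--Aronszajn construction. Define $F_{0}:=\spa\{K(\cdot,x):x\in X\}\subseteq\C^{X}$ and the sesquilinear form $\langle K(\cdot,x),K(\cdot,y)\rangle_{0}:=K(y,x)$, extended sesquilinearly. Three routine checks are required: (i) well-definedness, which follows because for any $f=\sum_{i}a_{i}K(\cdot,x_{i})\in F_{0}$ one has $\langle f,K_{z}\rangle_{0}=\sum_{i}a_{i}K(z,x_{i})=f(z)$, so the form depends only on $f$ as a function, not on its representation; (ii) positivity of $\langle\cdot,\cdot\rangle_{0}$, which is literally the PSDness of $K$; (iii) the Cauchy--Schwarz inequality, which gives the reproducing estimate $|f(x)|^{2}\leq\|f\|_{0}^{2}K(x,x)$, upgrading the form to an inner product.

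The main obstacle is realizing the abstract completion of $(F_{0},\langle\cdot,\cdot\rangle_{0})$ as a genuine space of functions on $X$, rather than an abstract set of equivalence classes of Cauchy sequences. To handle this, I would observe that each point evaluation $f\mapsto\langle f,K_{x}\rangle_{0}=f(x)$ is a $\sqrt{K(x,x)}$-bounded functional on $F_{0}$, so it extends continuously to the completion. Consequently every $\|\cdot\|_{0}$-Cauchy sequence in $F_{0}$ converges pointwise on $X$; I would then define $F$ as the set of such pointwise limits with the limit norm, and verify via the extended evaluations that distinct abstract limits yield distinct pointwise limits, so the identification is isometric. The reproducing property extends by continuity from $F_{0}$.

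Finally, for uniqueness, any RKHS $G$ with reproducing kernel $K$ must contain each $K_{x}$ with $\|K_{x}\|_{G}^{2}=K(x,x)$ and hence contain $F_{0}$ isometrically. The orthogonal complement of $F_{0}$ in $G$ consists of functions $g$ with $g(x)=\langle g,K_{x}\rangle_{G}=0$ for every $x\in X$, which is trivial, so $F_{0}$ is dense in $G$. Continuity of point evaluations on $G$ then forces $G$ to coincide with the constructed $F$ as a Hilbert space of functions on $X$.
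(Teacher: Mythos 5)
Your proof is correct and is the classical Moore--Aronszajn argument; the paper does not prove this theorem at all but simply cites Aronszajn's original article, and your construction (pre-Hilbert span of the $K_x$, well-definedness via the reproducing identity, Cauchy--Schwarz for the evaluation bound, realization of the completion through the pointwise limits of Cauchy sequences, and density of $\spa\{K_x\}$ for uniqueness) is exactly the one found there. The only step worth spelling out a little more is the injectivity of the map from the abstract completion into $\C^{X}$, i.e.\ that a $\|\cdot\|_{0}$-Cauchy sequence converging pointwise to $0$ must converge to $0$ in norm; this follows because its abstract limit is orthogonal to every $K_{x}$ and hence to the dense subspace $F_{0}$, which is the mechanism you correctly gesture at.
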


The word "unique" in the theorem means that any two RKHS's with the same kernel coincide as sets and have the same Hilbert Space structure. Consequently, one of the ways to interpret Moore-Aronszajn theorem is: for any positive semi-definite kernel $K$ on a set $X$ there is a Hilbert space $F$ and a map $\kappa:X\to F^{*}$, such that $\kappa\left(X\right)$ spans $F^{*}$ and $K$ is represented by a restriction of the scalar product of $F^{*}$ on $\kappa\left(X\right)$. Moreover, the pair $\left(F,\kappa\right)$ is unique up to a unitary equivalence.\\

\textbf{Complex domains.} Let $X$ be a domain in $\C^{n}$, i.e. an open connected set. An \emph{automorphism} of $X$ is a biholomorphism of $X$ into itself, i.e. a bijective holomorphic self-map of $X$. We will denote the group of all automorphisms of $X$ by $Aut\left(X\right)$. We say that $X$ is \emph{homogeneous} if $Aut\left(X\right)$ acts transitively on $X$, i.e. there is $x\in X$ such that the orbit $Aut\left(X\right)\cdot x=X$. Consider the following strengthenings of homogeneity: $X$ is called \emph{weakly symmetric} if for any $x,y\in X$ there is $\Phi\in Aut\left(X\right)$, such that $\Phi\left(x\right)=y$ and $\Phi\left(y\right)=x$; $X$ is called \emph{symmetric} if $\Phi$ in the previous definition is required to be an involution. It is a classic result of Cartan that in low dimensions these concepts coincide, but it is not true in general.

Let $\Ho\left(X\right)$ be the linear space of all holomorphic functions on $X$ endowed with the compact-open topology. A \emph{tangent vector} at $x\in X$ is an expression of the form $\sum\limits_{j=1}^{n}a_{j}\frac{\partial}{\partial z_{j}}\left|_{x}\right.$, where $a_{j}\in\C$, for $j\in\overline{1,n}$. The \emph{tangent space} $T_{x}=T_{x}^{1,0}X$ is the the vector space of all tangent vectors at $x$. If  $u=\sum\limits_{j=1}^{n}a_{j}\frac{\partial}{\partial z_{j}}\left|_{x}\right.$ and  $v=\sum\limits_{j=1}^{n}b_{j}\frac{\partial}{\partial z_{j}}\left|_{x}\right.$ denote $\overline{v}=\sum\limits_{j=1}^{n}\overline{b_{j}}\frac{\partial}{\partial \overline{z_{j}}}\left|_{x}\right.$, which is a Wirtinger derivative, and $u\otimes \overline{v}=\sum\limits_{j,k=1}^{n}a_{j}\overline{b_{k}}\frac{\partial^{2}}{\partial z_{j}\partial\overline{z_{k}}}\left|_{x}\right.$ - a "directional Laplacian".\\

\textbf{Sesqui-holomorphic Functions.} Let $X$ be a domain in $\C^{n}$. A sesqui-holomorphic function on $X$ is a function $L:X\times X\to \C$ which is holomorphic in the first variable and antiholomorphic in the second. Let $X^{*}=\left\{\overline{x}\left|x\in X\right.\right\}\subset \C^{n}$. By Hartogs theorem $L$ is sesqui-holomorphic if and only if the function $L':X\times X^{*}\to \C$ defined by $L'\left(x,y\right)=L\left(x,\overline{y}\right)$ is holomorphic. Hence, any sesqui-holomorphic function is continuous and if it vanishes on an open set in $X\times X$, it vanishes everywhere. The following result, which is an immediate consequence of \cite[Theorem II.4.7]{bm} shows that such functions are completely determined by its values on the diagonal (see also \cite{nik}).

\begin{proposition}\label{rig}
Let $L$ and $M$ be sesqui-holomorphic functions on $X$. If $\widehat{L}=\widehat{M}$, then $L=M$.
\end{proposition}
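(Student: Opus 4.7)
The plan is to reduce to showing that a sesqui-holomorphic function $N$ on $X$ with $\widehat{N} \equiv 0$ must vanish identically, and then apply this to $N = L - M$. Following the paragraph preceding the statement, pass to the companion $F \colon X \times X^{*} \to \C$ defined by $F(z, \zeta) = N(z, \overline{\zeta})$, which is holomorphic on the connected open set $X \times X^{*}$. The hypothesis becomes $F(z, \overline{z}) = 0$ for every $z \in X$, i.e.\ $F$ vanishes on the antidiagonal $\Delta^{*} = \{(z, \overline{z}) : z \in X\}$.

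I would do the work locally and then invoke the identity principle on $X \times X^{*}$. Fix $x_0 \in X$ and a polydisc $P \subset X$ centered at $x_0$; on the polydisc $P \times P^{*}$, expand
\[
F(z, \zeta) = \sum_{\alpha, \beta} c_{\alpha, \beta} (z - x_0)^{\alpha} (\zeta - \overline{x_0})^{\beta},
\]
where $\alpha, \beta$ range over non-negative integer multi-indices. Substituting $\zeta = \overline{z}$ and using the vanishing on $\Delta^{*}$ gives
\[
\sum_{\alpha, \beta} c_{\alpha, \beta} (z - x_0)^{\alpha} (\overline{z} - \overline{x_0})^{\beta} \equiv 0, \qquad z \in P.
\]
Writing $z - x_0 = u + iv$ with $u, v \in \R^n$, the relations $z_j - (x_0)_j = u_j + iv_j$ and $\overline{z_j} - \overline{(x_0)_j} = u_j - iv_j$ constitute an invertible $\C$-linear change of variables, so each monomial $(z - x_0)^{\alpha}(\overline{z} - \overline{x_0})^{\beta}$ is a polynomial in $(u, v)$ and the transition between the two families of monomials (graded by total degree) is invertible. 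The displayed identity therefore reads as a convergent real power series in $(u, v)$ vanishing on an open subset of $\R^{2n}$, so all of its coefficients must vanish, and inverting the change of basis gives $c_{\alpha, \beta} = 0$ for every $\alpha, \beta$. Hence $F \equiv 0$ on $P \times P^{*}$, and since $X \times X^{*}$ is a connected complex manifold, the identity principle promotes this to $F \equiv 0$ globally, i.e.\ $N \equiv 0$.

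The only non-routine ingredient is the linear independence of the family $\{(z - x_0)^{\alpha}(\overline{z} - \overline{x_0})^{\beta}\}$ as real-analytic functions in a neighborhood of $x_0$; this is handled by the change-of-basis argument above. Conceptually, it expresses the fact that the antidiagonal $\Delta^{*}$ is a totally real submanifold of $X \times X^{*}$ of maximal real dimension $2n$, on which holomorphic functions are determined uniquely; this cleanly packaged form is presumably what Theorem~II.4.7 of \cite{bm} provides.
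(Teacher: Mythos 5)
Your argument is correct. The paper itself offers no proof of this proposition---it simply invokes \cite[Theorem II.4.7]{bm}, which is precisely the polarization statement you reprove---so your write-up is a correct, self-contained reconstruction of the standard argument behind that citation: pass to the holomorphic companion on $X\times X^{*}$, kill the Taylor coefficients at a point of the antidiagonal via the invertible change of monomial bases $(z-x_0,\overline{z}-\overline{x_0})\leftrightarrow(u,v)$, and finish with the identity principle on the connected set $X\times X^{*}$. The only step worth flagging is the rearrangement of the double series after substituting $\zeta=\overline{z}$ into a power series in $(u,v)$; this is licensed by the absolute convergence of the Taylor expansion of $F$ on a compact polydisc, which you use implicitly and could mention in one line.
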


It is clear that if we only assume that $\widehat{L}=\widehat{M}$ on an open set in $X$, we also get that $L=M$. The following fact is of a similar spirit (see {\cite[Theorem 6.7]{erz2}}).

\begin{theorem}\label{rrig}
Let $L$ and $M$ be sesqui-holomorphic non-degenerate functions on $X$. The following are equivalent:
\item[(i)] $L$ and $M$ are rescallings;
\item[(ii)] $\frac{\partial^{2}}{\partial z_{j}\partial \overline{z_{k}}}\log \widehat{L}=\frac{\partial^{2}}{\partial z_{j}\partial \overline{z_{k}}}\log \widehat{M}$ on an open set in $X$;
\item[(iii)] There is $y\in X$ and an open $U\subset X$ such that $\frac{\left|L\left(x,y\right)\right|^{2}}{L\left(x,x\right)L\left(y,y\right)}=\frac{\left|M\left(x,y\right)\right|^{2}}{M\left(x,x\right)M\left(y,y\right)}$, for every $x\in U$.
\end{theorem}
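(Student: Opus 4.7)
The plan is to prove the cycle $\text{(i)} \Rightarrow \text{(ii)}$, $\text{(i)} \Rightarrow \text{(iii)}$, $\text{(iii)} \Rightarrow \text{(i)}$, and $\text{(ii)} \Rightarrow \text{(i)}$, with Proposition~\ref{rig} doing the heavy lifting in the two non-trivial directions.

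\textbf{Easy directions.} Assume (i), so $M = \omega \otimes \overline{\omega}\, L$ for some $\omega : X \to \C$. For any $y_{0}$, non-degeneracy gives $|\omega(y_{0})|^{2} = \widehat{M}(y_{0})/\widehat{L}(y_{0}) > 0$, and for $x$ outside the zero set $Z = \{L(\cdot, y_{0}) = 0\}$ the formula $\omega(x) = M(x, y_{0})/(\overline{\omega(y_{0})}\, L(x, y_{0}))$ exhibits $\omega$ as holomorphic. Since $|\omega|^{2} = \widehat{M}/\widehat{L}$ is bounded on compacta, Riemann removable singularities lets us take $\omega$ to be holomorphic and non-vanishing on all of $X$ (both sides of the rescalling identity match the original pointwise, by continuity, on the dense set $X\setminus Z$). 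Now (iii) follows at once: substituting the rescalling relation, the factor $|\omega(x)|^{2}|\omega(y)|^{2}$ cancels from numerator and denominator. For (ii), write $\log\widehat{M} = \log\widehat{L} + \log\omega + \overline{\log\omega}$ locally and apply $\partial_{z_{j}}\partial_{\overline{z_{k}}}$, which annihilates the holomorphic and antiholomorphic summands.

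\textbf{Proof of (iii) $\Rightarrow$ (i).} Let $y_{0}$ and $U$ be as in the hypothesis; shrinking $U$, we may assume $L(\cdot, y_{0})$ is non-zero on $U$. Define
\[
\omega(x) = c \cdot \frac{M(x, y_{0})}{L(x, y_{0})}, \qquad c = \sqrt{\frac{\widehat{L}(y_{0})}{\widehat{M}(y_{0})}},
\]
on the Zariski open set $X \setminus Z$, where $Z = \{L(\cdot,y_{0}) = 0\}$. A direct rearrangement of (iii) shows $|\omega(x)|^{2} = \widehat{M}(x)/\widehat{L}(x)$ for $x \in U$. Both $M$ and $\omega \otimes \overline{\omega}\, L$ are sesqui-holomorphic on $(X\setminus Z) \times (X\setminus Z)$ and share the same diagonal on the open set $U$; since $X\setminus Z$ is the connected complement of a proper analytic set, the remark following Proposition~\ref{rig} gives $M = \omega \otimes \overline{\omega}\, L$ on $(X\setminus Z)\times(X\setminus Z)$. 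Looking at the diagonal, $|\omega|^{2} = \widehat{M}/\widehat{L}$ extends to a bounded, strictly positive function on all of $X$, so $\omega$ has removable singularities at $Z$ and extends holomorphically and without zeros to $X$; the rescalling identity extends to $X \times X$ by continuity.

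\textbf{Proof of (ii) $\Rightarrow$ (i), and main obstacle.} Pick a simply connected ball $V$ inside the open set where the identity in (ii) holds. Pluriharmonicity of $\log(\widehat{M}/\widehat{L})$ on $V$ yields a holomorphic $h : V \to \Cp$ with $\log(\widehat{M}/\widehat{L}) = 2\,\mathrm{Re}\,\log h$, i.e.\ $\widehat{M} = |h|^{2}\,\widehat{L}$ on $V$. Proposition~\ref{rig} upgrades this diagonal identity to $M = h \otimes \overline{h}\, L$ on $V \times V$. To globalise, pick $y_{1} \in V$ with $h(y_{1}) \ne 0$, define $\widetilde{h}(x) = M(x, y_{1})/(\overline{h(y_{1})}\, L(x, y_{1}))$ on $X \setminus \{L(\cdot, y_{1}) = 0\}$, and repeat the rigidity-plus-removable-singularities argument of the previous paragraph. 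The main obstacle in both non-trivial directions is precisely this globalisation: what the hypotheses hand us is only a \emph{local} rescalling on some $V$, whereas an a priori extension to $X$ is merely meromorphic. The key point is that the rigidity of sesqui-holomorphic functions lets one promote the local identity $\widehat{M} = |h|^{2}\widehat{L}$ to the full sesqui-holomorphic identity $M = h \otimes \overline{h}\, L$ on the complement of an analytic hypersurface, after which the bounded positive function $\widehat{M}/\widehat{L}$ forces the apparent singularities of $h$ to be removable.
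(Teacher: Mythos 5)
Your argument is correct. Note that the paper itself does not prove Theorem \ref{rrig}: it is imported verbatim from \cite[Theorem 6.7]{erz2}, so there is no internal proof to compare against; your proposal therefore has to be judged on its own, and it holds up. The strategy --- reduce every implication to the diagonal rigidity of Proposition \ref{rig}, produce the candidate rescaling function either from a kernel slice $M(\cdot,y_{0})/L(\cdot,y_{0})$ or from pluriharmonicity of $\log(\widehat{M}/\widehat{L})$, and then clean up the zero set of $L(\cdot,y_{0})$ with the several-variables Riemann extension theorem --- is the natural one, and all the load-bearing steps check out: $L(\cdot,y_{0})\not\equiv 0$ because $L(y_{0},y_{0})=\widehat{L}(y_{0})>0$, so $Z$ is a proper analytic subset and $X\setminus Z$ is again a domain (this connectedness is what licenses applying the remark after Proposition \ref{rig} on $X\setminus Z$, and you should make it explicit); the bound $|\omega|^{2}=\widehat{M}/\widehat{L}$, continuous and positive on all of $X$, is exactly what Riemann's theorem needs; and the final identity extends from the dense set $(X\setminus Z)\times(X\setminus Z)$ by continuity of sesqui-holomorphic functions. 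Your (ii)$\Rightarrow$(i) step in effect reproves the classical Calabi rigidity \cite{cal} (the local equivalence of (i) and (ii)) via the representation of a pluriharmonic function on a ball as $2\,\mathrm{Re}$ of a holomorphic function, which is legitimate since $\widehat{L},\widehat{M}$ are real-analytic and positive; you could have shortened the write-up by observing that once $M=h\otimes\overline{h}L$ on $V\times V$ you already have condition (iii) with $y_{1}\in V$ and $U=V$, so the globalisation argument only needs to be carried out once, in the proof of (iii)$\Rightarrow$(i).
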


Note that the local equivalence of (i) and (ii) is known as Calabi Rigidity (see \cite{cal}).

\begin{remark}\label{rss} It is easy to see that any function that rescales sesqui-holomorphic functions is holomorphic. Moreover, it is determined uniquely up to a constant unimodular multiple.\qed\end{remark}

We also need to calculate derivatives of the diagonals of sesqui-holomorphic functions. For simplicity, assume that $L:\C\times\C\to\C$. Recall that $\widehat{L}=L\circ j_{\C}$, where $j_{\C}=Id\oplus Id$. Note that $\frac{d}{dz}j_{\C}=1\oplus 1$, $\frac{d}{d\overline{z}}j_{\C}=0\oplus 0$ and $\frac{\partial}{\partial w}L=\frac{\partial}{\partial\overline{z}}L=0$. Then using the chain rule for the Wirtinger derivatives we get that $\frac{d}{dz}\widehat{L}=\left(\frac{\partial}{\partial z}L\right)\circ j_{\C}=\widehat{\frac{\partial}{\partial z}L}$ and $\frac{d}{d\overline{z}}\widehat{L}=\widehat{\frac{\partial}{\partial \overline{w}}L}$. Since $\frac{\partial}{\partial z}L$ and $\frac{\partial}{\partial \overline{w}}L$ are also sesqui-holomorphic, it follows that $\frac{d^{l+m}}{dz^{l}d\overline{z}^{n}}\widehat{L}=\widehat{\frac{\partial^{l+m}}{\partial z^{l}\partial \overline{w}^{m}}L}$. Finally, if $\widehat{L}$ is real-valued, then $\frac{d}{d\overline{z}}\widehat{L}=\overline{\frac{d}{dz}\widehat{L}}$.

From the formulas above it is easy to deduce that if $X$ is a domain in $\C^{n}$, $x\in X$, $u,v\in T_{x}$ and $L$ is sesqui-holomorphic on $X$, then $u\widehat{L}=uL\left(\cdot,x\right)$ and $u\otimes \overline{v}\widehat{L}=u\overline{v}_{2}L=\overline{v}u_{1}L$, where the subindex indicates the variable with respect to which the differentiation is implemented.\\

\textbf{Sesqui-holomorphic Reproducing Kernels.} If $L$ is conjugate-symmetric it is sesqui-holomorphic if and only if it is holomorphic in the first variable. On the other hand, sesqui-holomorphic function $L$ is conjugate-symmetric if and only if $\widehat{L}$ is real valued. While necessity is obvious, sufficiency follows from Proposition \ref{rig} applied to $L$ and $L^{*}$ which is a sesqui-holomorphic function on $X$ defined by $L^{*}\left(x,y\right)=\overline{L\left(y,x\right)}$. If we then assume that $\widehat{L}$ does not vanish, then either $L$ or $-L$ is non-degenerate. Let us  furthermore consider positive semi-definite sesqui-holomorphic functions. In fact, a sesqui-holomorphic function is positive semi-definite if it is positive semi-definite on an open set in $X$ (see the proof in \cite{bbb}). The following theorem describes what kind of RKHS's we obtain from such kernels. See the proof in \cite{gl}.

\begin{theorem}\label{shrkhs}  Let $X$ be a domain in $\C^{n}$, let $K$ be a positive semi-definite kernel on $X$ and let $F$ be the corresponding RKHS. The following are equivalent:
\item[(i)] $K$ is a sesqui-holomorphic kernel;
\item[(ii)] $F$ consists of holomorphic functions;
\item[(iii)] The evaluation map $\kappa$ is holomorphic into $F^{*}$.
\end{theorem}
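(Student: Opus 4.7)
The plan is to run the cycle $\mathrm{(i)}\Rightarrow\mathrm{(ii)}\Rightarrow\mathrm{(iii)}\Rightarrow\mathrm{(i)}$, each link being short once the right tool is chosen.

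For $\mathrm{(i)}\Rightarrow\mathrm{(ii)}$, I would first note that each section $K_{y}=K(\cdot,y)$ lies in $F$ and is holomorphic by hypothesis, and that $\spa\{K_{y}:y\in X\}$ is dense in $F$. The reproducing property combined with Cauchy--Schwarz yields
\[|f(x)-g(x)|=|\langle f-g,K_{x}\rangle_{F}|\le\|f-g\|_{F}\sqrt{\widehat{K}(x)}.\]
Since sesqui-holomorphic kernels are continuous (as recorded in the paragraph on sesqui-holomorphic reproducing kernels), $\widehat{K}$ is bounded on compact subsets of $X$; so norm convergence in $F$ implies locally uniform convergence of the underlying functions, and Weierstrass' theorem promotes holomorphicity from the dense family of $K_{y}$ to every element of $F$.

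For $\mathrm{(ii)}\Rightarrow\mathrm{(iii)}$, I would invoke the standard criterion that a map into a Banach space is holomorphic if and only if it is locally bounded and weakly holomorphic. Weak holomorphicity is immediate: each $f\in F$ induces a continuous linear evaluation functional $\mathrm{ev}_{f}:F^{*}\to\C$, $\mathrm{ev}_{f}(l)=l(f)$, and $\mathrm{ev}_{f}\circ\kappa=f$, which is holomorphic by $\mathrm{(ii)}$; since $F$ separates the points of $F^{*}$, this gives weak holomorphicity against a separating family. For local boundedness I would use $\|\kappa(x)\|^{2}=\widehat{K}(x)$: for any compact $V\subset X$ and any $f\in F$, the set $\{f(x):x\in V\}$ is bounded because $f$ is continuous (being holomorphic), so $\{\kappa(x):x\in V\}$ is pointwise, hence weakly, bounded, and the uniform boundedness principle upgrades this to $\sup_{x\in V}\widehat{K}(x)<\infty$. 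This Banach--Steinhaus step is the only non-formal point in the whole argument and is the main obstacle I anticipate, since everything else is direct manipulation of the reproducing property and of the definition of a Banach-space-valued holomorphic map.

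For $\mathrm{(iii)}\Rightarrow\mathrm{(i)}$, I would use $K(x,y)=\langle\kappa(x),\kappa(y)\rangle_{F^{*}}$ together with the sesquilinearity of the inner product: with $y$ fixed, $\langle\cdot,\kappa(y)\rangle_{F^{*}}$ is a continuous linear functional on $F^{*}$, so $K(\cdot,y)$ is the composition of the holomorphic $\kappa$ with a continuous linear functional, hence holomorphic; with $x$ fixed, $\langle\kappa(x),\cdot\rangle_{F^{*}}$ is continuous conjugate-linear, so $K(x,\cdot)$ is antiholomorphic, which is exactly sesqui-holomorphicity of $K$.
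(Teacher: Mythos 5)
Your argument is correct. Note that the paper does not actually prove this theorem --- it only cites Garc\'ia--Littlejohn for it --- so there is no in-text proof to compare against; but your cycle $\mathrm{(i)}\Rightarrow\mathrm{(ii)}\Rightarrow\mathrm{(iii)}\Rightarrow\mathrm{(i)}$ is a complete, self-contained proof along the standard lines. Each link checks out: the Cauchy--Schwarz estimate $|f(x)-g(x)|\le\|f-g\|_{F}\sqrt{\widehat{K}(x)}$ together with local boundedness of $\widehat{K}$ (from continuity of sesqui-holomorphic kernels, recorded in the preliminaries) and Weierstrass' theorem handles $\mathrm{(i)}\Rightarrow\mathrm{(ii)}$; the Banach--Steinhaus step you flag as the main obstacle is exactly right and is in fact the proof of Dunford's weak-to-strong holomorphy theorem --- and since $F$ is a Hilbert space, your separating family $\{\mathrm{ev}_{f}:f\in F\}$ is, by reflexivity, the \emph{entire} dual of $F^{*}$, so you do not even need the refined ``separating subset'' version of the criterion; and $\mathrm{(iii)}\Rightarrow\mathrm{(i)}$ is immediate from the sesquilinearity of the inner product as you say. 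The only point worth double-checking in your write-up is the linearity of $\mathrm{ev}_{f}$ on $F^{*}$ and the identity $\langle\kappa(x),\kappa(y)\rangle_{F^{*}}=K(x,y)$, both of which hold with the paper's conventions ($\ast$ antilinear, $\langle g^{*},h^{*}\rangle_{H^{*}}=\langle h,g\rangle_{H}$), so the proof stands as written.
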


The simplest examples of positive semi-definite sesqui-holomorphic kernels are functions of the form $\omega\otimes\overline{\omega}$, where $\omega\in\Ho\left(X\right)$. Also note that there are non-degenerate sesqui-holomorphic functions, which are not positive semi-definite. For example, $L\left(x,y\right)=1-x\overline{y}$ is not positive semi-definite on the unit disk, since $1=\left|L\left(0,\frac{1}{2}\right)\right|^{2}\not\le L\left(0,0\right)L\left(\frac{1}{2},\frac{1}{2}\right)=\frac{3}{4}$.\\

\textbf{Unitary Invariant Hermitean Metrics.} Let $H$ be a complex Hilbert space, let $G=H\backslash\left\{0\right\}$ and let $\sigma:G\times H\times H\to\C$ be such that $\sigma_{g}=\sigma\left(g,\cdot,\cdot\right)$ is conjugate-symmetric and sesquilinear on $H$, for every $g\in G$. Then $\sigma$ is called positive (semi-)definite if $\sigma_{g}$ is positive (semi-)definite for each $g\in G$. We say that $\sigma$ is invariant with respect to an injective linear operator $T$ on $H$ if $\sigma_{Tg}\left(Tf,Th\right)=\sigma_{g}\left(f,h\right)$ for every $g\in G$ and $f,h\in H$. In this case the length defined by $\sigma$ is also invariant with respect to $T$.

The following theorem incorporates several results from \cite{erz}: Proposition 2.8, Corollary 3.4, Theorem 3.7 and Remark 3.8. We characterise unitary-invariant and congruency-invariant Hermitean metric on $G$, i.e. such that are invariant with respect to all unitaries and congruencies (scalar multiples of isometries). The importance of the latter invariance is justified by part (iv) of the theorem.

\begin{theorem}\label{rm1} The function $\sigma$ is unitary-invariant if and only if there are (unique) functions $\varphi,\psi:\left(0,+\8\right)\to\R$, such that $\sigma_{g}\left(f,h\right)=\varphi\left(\|g\|^{2}\right)\left<f,h\right>+\psi\left(\|g\|^{2}\right)\left<f,g\right>\left<g,h\right>$, for $g\in G$, and $f,h\in H$. Moreover, in this case the following hold:
\item[(i)] $\sigma$ is positive definite if and only if $\varphi\left(r\right)> 0$ and $\varphi\left(r\right)+r\psi\left(r\right)> 0$ for every $r>0$;
\item[(ii)] The degree of smoothness of $\sigma$ coincides with the minimal degree of smoothness of $\varphi$ and $\psi$;
\item[(iii)] $\sigma$ is invariant with respect to all congruencies if and only if there are (unique) $a,b\in\R$, such that $\varphi\left(t\right)=\frac{a}{t}$ and $\psi\left(t\right)=\frac{b}{t^{2}}$;
\item[(iv)] If $\sigma$ is invariant with respect to a linear operator $T:H\to H$, then $T$ is a congruency.
\end{theorem}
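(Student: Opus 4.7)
The plan is to derive the structural formula first and then read off parts (i)--(iii) directly, leaving (iv) as the only substantial task. Throughout I implicitly assume $\dim H \ge 2$; the one-dimensional case is degenerate since $\varphi$ and $\psi$ are only determined through a single combination.

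For the main equivalence, I fix $g \in G$ and exploit invariance under the stabilizer of $g$ in $U(H)$, which acts as $1 \oplus U(g^{\perp})$ on the decomposition $H = \C g \oplus g^{\perp}$. Writing $f = \alpha g + f'$ and $h = \beta g + h'$ with $f', h' \in g^{\perp}$, and expanding $\sigma_{g}(f,h)$ by sesquilinearity yields four terms. Testing invariance against the element $1 \oplus e^{i\theta}I$ kills the two cross terms $\sigma_{g}(g, h')$ and $\sigma_{g}(f', g)$, while testing against arbitrary $1 \oplus V$ with $V \in U(g^{\perp})$ forces the restriction of $\sigma_{g}$ to $g^{\perp}$ to be a scalar multiple of $\langle \cdot, \cdot \rangle$ (a standard fact about sesquilinear forms fixed by all unitaries on a Hilbert space: compute on an orthonormal basis and apply phase rotations to off-diagonal entries). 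Call this scalar $\varphi_{g}$. Full unitary invariance then promotes $\varphi_{g}$ and $\sigma_{g}(g,g)$ to functions of $\|g\|^{2}$ alone, since any two vectors of equal norm are unitarily equivalent. Substituting $\alpha = \langle f, g\rangle / \|g\|^{2}$, $\bar\beta = \langle g, h\rangle / \|g\|^{2}$, and $\langle f', h'\rangle = \langle f, h\rangle - \langle f, g\rangle \langle g, h\rangle / \|g\|^{2}$ yields the asserted formula, with
\[ \psi(\|g\|^{2}) = \frac{\sigma_g(g,g)}{\|g\|^{4}} - \frac{\varphi(\|g\|^{2})}{\|g\|^{2}}. \]
Uniqueness follows at once: $\varphi(\|g\|^{2})$ is recovered from $\sigma_{g}(f,f)$ for any unit $f \perp g$, and then $\psi(\|g\|^{2})$ from $\sigma_{g}(g,g)$.

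Parts (i)--(iii) drop out by short computations. For (i), the decomposition $\sigma_{g}(f,f) = \varphi(\|g\|^{2})\|f'\|^{2} + |\alpha|^{2}\|g\|^{2}\bigl(\varphi(\|g\|^{2}) + \|g\|^{2}\psi(\|g\|^{2})\bigr)$ decouples parallel and transverse directions, and $\|f'\|, |\alpha|$ may be prescribed independently, so positive definiteness is equivalent to both bracketed quantities being strictly positive. For (ii), the formula and the extraction procedure above transfer smoothness back and forth between $\sigma$ and $(\varphi, \psi)$. For (iii), substituting $T = \lambda U$ with $U$ unitary and $\lambda \in \Cp$ into $\sigma_{Tg}(Tf, Th) = \sigma_{g}(f, h)$ and separating the two tensorially independent terms yields the functional equations $|\lambda|^{2}\varphi(|\lambda|^{2}r) = \varphi(r)$ and $|\lambda|^{4}\psi(|\lambda|^{2}r) = \psi(r)$; setting $r = 1$ gives $\varphi(t) = a/t$, $\psi(t) = b/t^{2}$ with $a = \varphi(1)$, $b = \psi(1)$, both real because $\sigma_{g}$ is conjugate-symmetric.

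The real work is (iv). In the congruency-invariant form of (iii), invariance under an injective linear $T$ reads
\[ \frac{a}{\|Tg\|^{2}}\langle Tf, Th\rangle + \frac{b}{\|Tg\|^{4}}\langle Tf, Tg\rangle \langle Tg, Th\rangle = \frac{a}{\|g\|^{2}}\langle f, h\rangle + \frac{b}{\|g\|^{4}}\langle f, g\rangle\langle g, h\rangle. \]
The strategy is targeted specialisation. Take $f \perp g$ and $h = g$: the right-hand side vanishes and the left collapses to $(a+b)/\|Tg\|^{2} \cdot \langle Tf, Tg\rangle$, so if $a + b \neq 0$ then $T$ preserves orthogonality. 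Next, set $f = h \perp g$ with $\|f\|=1$; the remaining identity forces $\|Tf\|^{2} = \|Tg\|^{2}/\|g\|^{2}$, so every unit $f \perp g$ has the same image norm. Varying $g$ by choosing $f$ orthogonal to two distinct $g_1, g_2$ simultaneously (always possible if $\dim H \ge 3$) shows the common value is independent of $g$, whence $\|Tg\| = c\|g\|$ for a fixed $c > 0$; polarisation then gives $\langle Tf, Tg\rangle = c^{2}\langle f, g\rangle$, so $c^{-1}T$ is a linear isometry and $T$ is a congruency. The main obstacle is the case $a + b = 0$ or $a = 0$, which by (i) coincides with $\sigma$ failing to be positive definite; handling this cleanly likely requires either an additional nondegeneracy hypothesis on $\sigma$ or a separate argument using more delicate test vectors, and is where I expect the bulk of the work in (iv) to lie.
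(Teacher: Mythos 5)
The paper does not actually prove this theorem: it is imported verbatim from \cite{erz} (Proposition 2.8, Corollary 3.4, Theorem 3.7 and Remark 3.8 there), so there is no in-text argument to measure yours against. Taken on its own terms, your derivation of the structural formula is sound: the stabilizer $1\oplus U(g^{\perp})$, the phase rotation $1\oplus e^{i\theta}I$ killing the cross terms, the Schur-type identification of the invariant form on $g^{\perp}$ as $\varphi_{g}\langle\cdot,\cdot\rangle$, and the promotion of $\varphi_{g}$, $\sigma_{g}(g,g)$ to functions of $\|g\|^{2}$ by transitivity on spheres all work (for $\dim H\ge 2$, which must indeed be assumed for uniqueness), and (i)--(iii) follow from the formula exactly as you compute.

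The one genuine gap is in (iv), and it is a logical one rather than a computational one: you prove (iv) only for $\sigma$ already in the congruency-invariant form $\varphi=a/t$, $\psi=b/t^{2}$ of part (iii), whereas the hypothesis of (iv) is merely unitary-invariance (plus, implicitly, positive definiteness --- your instinct that some nondegeneracy must be read into the statement is correct, since $\sigma\equiv 0$ is invariant under every operator; the surrounding text's standing assumption that $\sigma$ is a Hermitean metric supplies it). The repair costs nothing: running your specialization $f\perp g$, $h=g$ against the general formula gives $\left(\varphi(s)+s\,\psi(s)\right)\langle Tf,Tg\rangle=0$ with $s=\|Tg\|^{2}$, so positive definiteness forces $T$ to preserve orthogonality --- and this simultaneously disposes of your worrisome cases $a=0$ and $a+b=0$, which positive definiteness excludes. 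Your subsequent norm-comparison step needs $\dim H\ge 3$ and leaves $\dim H=2$ open; it is cleaner to note that an injective linear $T$ preserving orthogonality is automatically a congruency: for orthonormal $f_{1},f_{2}$ the vectors $f_{1}\pm f_{2}$ are orthogonal, and since $\langle Tf_{1},Tf_{2}\rangle=0$ one gets $0=\langle T(f_{1}+f_{2}),T(f_{1}-f_{2})\rangle=\|Tf_{1}\|^{2}-\|Tf_{2}\|^{2}$, whence $\|Tx\|=c\|x\|$ for all $x$ by expanding in an orthonormal pair spanning any given two-dimensional subspace, and polarisation finishes. With these two adjustments your proof of (iv) closes in full generality.
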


We supply the theorem with several of remarks and an example also taken from \cite{erz}. In the following section they will be adapted to the case when $H^{*}$ is a RKHS, as well as the theorem above.

\begin{remark}\label{k}
Once $\varphi$ and $\psi$ are sufficiently smooth, $\varphi\left(r\right)> 0$ and $\varphi\left(r\right)+r\psi\left(r\right)> 0$, then $\sigma$ is a Hermitean metric on $G$. Furthermore, $\sigma$ is a Kaehler metric if and only if $\psi=\varphi'$. In this case $\chi\circ\|\cdot\|^2$ is the potential of this metric, where $\chi'=\varphi$.\qed
\end{remark}

\begin{remark}\label{dg2}
The non-strict analogues of the strict inequalities in part (i) correspond to the positive semi-definiteness of $\sigma$. In particular, if $\varphi\left(r\right)=-r\psi\left(r\right)$, for every $r>0$, then $\sigma$ glues elements that are scalar multiples of each other, i.e. factorizes by $\C$-lines. The case when $\varphi\left(r\right)=0$ leads to identifying all elements of norm $r$.\qed
\end{remark}

\begin{remark}\label{dg1}
The positive definiteness of congruency-invariant $\sigma$ is equivalent to $a+b>0$ in part (iii). The latter contradicts to the necessary condition for $\sigma$ to be Kaehler, which is reduced to $a=-b$. Thus, there is no Kaehler metrics on $G$ invariant with respect to all congruencies.\qed
\end{remark}

\begin{example}\label{fs}
Let $\tilde{\sigma}_{g}\left(f,h\right)=\frac{1}{\|g\|^{2}}\left<f,h\right>-\frac{1}{\|g\|^{4}}\left<f,g\right>\left<g,h\right>$. By Remark \ref{dg1}, this is the unique (up to scalar multiplication) "degenerate Kaehler metric" which is invariant with respect to all congruencies. Using Remark \ref{dg2} one can show that it is also the unique (up to scalar multiplication) "degenerate Kaehler metrics" which factorizes by the $\C$-lines. Since $\tilde{\sigma}$ is the pull-back of the classical Fubiny-Study metric on the projective space $PH$ via the natural quotient map, we find it natural to call $\tilde{\sigma}$ the Fubini-Study metric on  $G$. Note, that $2\log\|\cdot\|$ is the potential of this metric. Inspired by \cite{arsw} and following \cite{kob}, we consider two congruency-invariant pseudodistances on $G$ related to $\tilde{\sigma}$. For $g,h\in G$ define $$\delta_{1}\left(g,h\right)=\sin\angle\left(g,h\right)=\sqrt{1-\frac{\left|\left<g,h\right>\right|^{2}}{\|g\|^{2}\|h\|^{2}}},~\delta_{2}\left(g,h\right)=\sin\frac{\angle\left(g,h\right)}{2}=\sqrt{2-2\frac{\left|\left<g,h\right>\right|}{\|g\|\|h\|}}.$$ Note that $\delta_{1}$ and $\delta_{2}$ also factorise by the $\C$-lines; while the geometric meaning of $\delta_{1}$ is obvious, $\delta_{2}$ is the distance between the intersections of the $\C$-lines defined by $g,h$ and the unit sphere. The importance of $\delta_{1}$ and $\delta_{2}$ for us is determined by the fact that the length of the curves with respect to $\delta_{1}$, $\delta_{2}$ and $\sigma$ coincide.\qed
\end{example}

\section{Pull-back of a unitary invariant Hermitean metric on a dual of a RKHS}\label{pull}

\textbf{Main Result.} Assume that $X$ is a domain in $\C^{n}$ and $F$ is a RKHS with a sesqui-holomorphic kernel $K$, which by Theorem \ref{shrkhs} is equivalent to holomorphicity of the evaluation map $\kappa$. Then any Hermitean metric on $F^{*}$ can be pulled back to $X$ via $\kappa$. Since there are a lot of possible representations of $K$ as a scalar product, but all of them are unitary equivalent, the most natural choice is to pull-back the unitary-invariant Hermitean metrics on $F^{*}$. In what follows we provide an analytic description for all such pull-backs.

Observe that all elements of $F$ are holomorphic, since any function $f\in F$ can be represented as a composition of $\kappa$ and a \textbf{linear} functional $f=f^{**}$ on $F^{*}$. Moreover, applying the chain rule to this composition, we get that $u f=\left<f,u\kappa\right>$, for any $u\in T_{x}$, $x\in X$. Here $u\kappa$ is the limit of elements of $F^{*}$, analogous to the limit of scalars, when $u$ is applied to usual functions. For example, if $X=\C$, $x=0$ and $u=\left.\frac{d}{dz}\right|_{0}$, then $u\kappa=\lim_{z\to 0}\frac{\kappa\left(z\right)-\kappa\left(0\right)}{z}$. We will call such functionals \emph{derivative point evaluations}. The following result characterizes the mutual location of the point evaluations and derivative point evaluations in $F^{*}$.

\begin{proposition}\label{dpe1}
Let $x,y,z\in X$, let $u\in T_{x}$ and let $v\in T_{y}$. Then
\item[(i)] $\left<u\kappa,\kappa\left(z\right)\right>_{F^{*}}=uK\left(\cdot,z\right)$ and $\left[^{*}u\kappa\right]\left(z\right)=\overline{uK\left(\cdot,z\right)}=\overline{u}K\left(z,\cdot\right)$;
\item[(ii)] $\left<u\kappa, v\kappa\right>_{F^{*}}=\overline{v}u_{1}K$, where the inner differentiation is with respect to the first variable.
\end{proposition}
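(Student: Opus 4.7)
The plan is to exploit the holomorphy of $\kappa$ into $F^{*}$ (Theorem~\ref{shrkhs}) together with the sesquilinear structure of $\left<\cdot,\cdot\right>_{F^{*}}$ in order to pass Wirtinger derivatives through the defining identity $K(x,y)=\left<\kappa(x),\kappa(y)\right>_{F^{*}}$, one slot at a time.

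For the first formula in (i), I would differentiate $K(x,z)=\left<\kappa(x),\kappa(z)\right>_{F^{*}}$ in $x$ along $u$. Since $\kappa$ is holomorphic, the difference quotients defining $u\kappa$ converge in $F^{*}$, and because the inner product is continuous and linear in its first slot, $u$ passes through to yield $uK(\cdot,z)=\left<u\kappa,\kappa(z)\right>_{F^{*}}$. For the second formula, I would apply the reproducing property to ${}^{*}u\kappa\in F$ and $K_{z}\in F$, and then translate to $F^{*}$ via the defining relation $\left<a^{*},b^{*}\right>_{F^{*}}=\left<b,a\right>_{F}$ and Hermitean symmetry:
\[
[{}^{*}u\kappa](z)=\left<{}^{*}u\kappa,K_{z}\right>_{F}=\overline{\left<K_{z},{}^{*}u\kappa\right>_{F}}=\overline{\left<u\kappa,\kappa(z)\right>_{F^{*}}}=\overline{uK(\cdot,z)}.
\]
The alternative form $\overline{u}K(z,\cdot)$ would then follow from the conjugate-symmetry $K(x,z)=\overline{K(z,x)}$ together with the fact that conjugating a $(1,0)$-Wirtinger derivative produces the $(0,1)$-derivative acting on the conjugated argument.

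For (ii), I would differentiate the first identity from (i) once more, this time in the variable $y$. As a function of $y$, $\left<u\kappa,\kappa(y)\right>_{F^{*}}=u_{1}K(x,y)$ equals $\overline{[{}^{*}u\kappa](y)}$, hence is anti-holomorphic in $y$, and the natural direction in which to differentiate is $\overline{v}$. Writing $v=\sum_{j}b_{j}\frac{\partial}{\partial z_{j}}$ at $y$ and pulling the coefficients through the conjugate-linear second slot turns each $b_{j}$ into $\overline{b_{j}}$, so continuity of the inner product yields
\[
\overline{v}_{2}u_{1}K=\overline{v}_{2}\left<u\kappa,\kappa(y)\right>_{F^{*}}=\sum_{j}\overline{b_{j}}\left<u\kappa,\tfrac{\partial\kappa}{\partial z_{j}}\right>_{F^{*}}=\left<u\kappa,v\kappa\right>_{F^{*}}.
\]

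The main bookkeeping challenge is matching Wirtinger directions to the sesquilinear slots of $\left<\cdot,\cdot\right>_{F^{*}}$: a $(1,0)$-derivative passes through the first slot unchanged, while a $(1,0)$-derivative applied inside the conjugate-linear second slot is effectively exchanged for the corresponding $(0,1)$-derivative acting on the anti-holomorphic dependence. Once this correspondence between $v\kappa$ and $\overline{v}_{2}$ is firmly installed, the rest is routine chain rule plus continuity of the inner product on $F^{*}$.
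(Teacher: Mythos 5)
Your proof is correct and takes essentially the same route as the paper: both parts amount to passing the derivative point evaluations $u\kappa$ and $v\kappa$ through the reproducing identity and the antilinear isometry $\ast$, and your direct differentiation of $\left\langle u\kappa,\kappa\left(y\right)\right\rangle_{F^{*}}$ in the conjugate-linear second slot is just a rephrasing of the paper's step of applying $v$ to the function ${}^{*}u\kappa$ computed in part (i). No gaps.
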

\begin{proof} We will use properties of the operation $\ast$ between $F$ and $F^{*}$. Part (i) follows from $$\overline{\left[^{*}u\kappa\right]\left(z\right)}=\overline{\left<^{*}u\kappa,\kappa\left(z\right)\right>}=\left<u\kappa,\kappa\left(z\right)\right>_{F^{*}}=\left<u\kappa,^{*}\kappa\left(z\right)\right>=uK_{z}=uK\left(\cdot,z\right).$$
(ii): We can apply $v$ to the function $^{*}u\kappa\in F$, which was calculated in the previous part. We get
$v\overline{u_{1}K}=v\left(^{*}u\kappa\right)=\left<v\kappa, ^{*}u\kappa\right>=\left<v\kappa, u\kappa\right>_{F^{*}}$, and so\\ $\left<u\kappa, v\kappa\right>_{F^{*}}=\overline{v\overline{u_{1}K}}=\overline{v}u_{1}K$.
\end{proof}

Thus, the functions that represent the "derivative point evaluations" are the conjugates of the corresponding derivatives of $K$ in the first variable. From the preceding proposition and the formulas for derivatives of a diagonal of a sesqui-holomorphic functions we get the following corollary.

\begin{corollary}\label{dpe2}
Let $x\in X$ and let $u,v\in T_{x}$. Then $\left<u\kappa,\kappa\left(x\right)\right>_{F^{*}}=u\widehat{K}$ and $\left<u\kappa, v\kappa\right>_{F^{*}}=u\otimes\overline{v}\widehat{K}$.
\end{corollary}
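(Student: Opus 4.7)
The plan is to derive this corollary as a direct specialization of Proposition \ref{dpe1} combined with the differentiation formulas for diagonals of sesqui-holomorphic functions stated earlier in the Sesqui-holomorphic Functions subsection. Since $K$ is sesqui-holomorphic (by Theorem \ref{shrkhs}, as we are in the standing setting of a sesqui-holomorphic kernel), those formulas apply to $L = K$.

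For the first identity, I would invoke Proposition \ref{dpe1}(i) with $z = x$, giving $\langle u\kappa, \kappa(x)\rangle_{F^{*}} = u K(\cdot, x)$. The earlier identity $u\widehat{L} = u L(\cdot, x)$ for sesqui-holomorphic $L$ and $u \in T_x$ then yields $uK(\cdot, x) = u\widehat{K}$, which is what we want.

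For the second identity, I would apply Proposition \ref{dpe1}(ii) with $y = x$, so that both $u, v \in T_x$, obtaining $\langle u\kappa, v\kappa\rangle_{F^{*}} = \overline{v} u_1 K$. The mixed derivative formula for sesqui-holomorphic functions, namely $u \otimes \overline{v}\,\widehat{L} = \overline{v} u_1 L$ when $u, v$ are tangent vectors at the same point $x$, directly identifies $\overline{v} u_1 K$ with $u \otimes \overline{v}\,\widehat{K}$, completing the proof.

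There is no real obstacle here: the corollary is essentially a bookkeeping step that translates Proposition \ref{dpe1}, which expresses the inner products in terms of derivatives of $K$ itself (evaluated off-diagonally), into the more convenient form using only the diagonal function $\widehat{K}$ and the "directional Laplacian" $u \otimes \overline{v}$ introduced in the preliminaries. The only point worth checking carefully is that the differentiation formulas stated for $L : \mathbb{C} \times \mathbb{C} \to \mathbb{C}$ extend coordinatewise to domains in $\mathbb{C}^n$, but this is immediate from linearity of the Wirtinger derivatives and the expansion $u = \sum_j a_j \frac{\partial}{\partial z_j}\big|_x$, $v = \sum_k b_k \frac{\partial}{\partial z_k}\big|_x$.
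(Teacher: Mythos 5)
Your proposal is correct and matches the paper exactly: the paper derives the corollary precisely by specializing Proposition \ref{dpe1} to $z=x$ and $y=x$ and then invoking the diagonal-derivative identities $u\widehat{L}=uL\left(\cdot,x\right)$ and $u\otimes\overline{v}\widehat{L}=\overline{v}u_{1}L$ from the preliminaries. No gaps.
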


Now we can express the pull-back of any unitary invariant Hermitean metric on $F^{*}\backslash\left\{0\right\}$ via $\kappa$.

\begin{theorem}\label{main}
Let $X$ be a domain in $\C^{n}$ and let $K$ be a sesqui-holomorphic non-degenerate positive semi-definite kernel with the corresponding RKHS $F$. Assume that a Hermitean metric $\sigma^{K}$ on $X$ is a pull-back of a certain unitary-invariant metric on $F^{*}\backslash\left\{0\right\}$. Then there are smooth functions $\varphi,\psi:\left(0,+\8\right)\to \R$, such that for any $x\in X$ and $u,v\in T_{x}$ we have
$$\sigma^{K}_{x}\left(u,v\right)=\varphi\left(\widehat{K}\left(x\right)\right)u\otimes\overline{v}\widehat{K}+\psi\left(\widehat{K}\left(x\right)\right)u\widehat{K}\overline{v}\widehat{K}.$$
\end{theorem}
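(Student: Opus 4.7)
The plan is to combine the classification of unitary-invariant forms on Hilbert space from Theorem \ref{rm1} with the translation dictionary provided by Corollary \ref{dpe2}, which converts inner products of point and derivative-point evaluations into derivatives of $\widehat{K}$. The entire argument is then a pull-back computation.

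First I would observe that since $K$ is non-degenerate, $\|\kappa(x)\|^2 = \widehat{K}(x) > 0$ for every $x \in X$, so $\kappa(X) \subseteq F^* \setminus \{0\}$ and the pull-back is defined pointwise. By Theorem \ref{shrkhs}(iii) the map $\kappa$ is holomorphic into $F^*$, and its differential at $x$ sends $u \in T_x$ to the derivative point evaluation $u\kappa \in F^*$ defined just before Proposition \ref{dpe1}. Writing $\sigma$ for the unitary-invariant form on $F^* \setminus \{0\}$ with $\sigma^K = \kappa^*\sigma$, the pull-back reads $\sigma^K_x(u,v) = \sigma_{\kappa(x)}(u\kappa, v\kappa)$ for $x \in X$ and $u,v \in T_x$.

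Next I would apply Theorem \ref{rm1} to $\sigma$, which gives functions $\varphi, \psi$ such that
\[ \sigma_g(f,h) = \varphi(\|g\|^2)\langle f, h\rangle + \psi(\|g\|^2)\langle f, g\rangle\langle g, h\rangle \]
for all $g \in F^*\setminus\{0\}$ and $f, h \in F^*$. Substituting $g = \kappa(x)$, $f = u\kappa$, $h = v\kappa$, and invoking Corollary \ref{dpe2} to identify $\|\kappa(x)\|^2 = \widehat{K}(x)$, $\langle u\kappa, v\kappa\rangle = u\otimes\overline{v}\widehat{K}$, and $\langle u\kappa, \kappa(x)\rangle = u\widehat{K}$, produces almost the desired formula. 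The only piece of bookkeeping is the third factor $\langle \kappa(x), v\kappa\rangle = \overline{v\widehat{K}}$, which equals $\overline{v}\widehat{K}$ because $\widehat{K}$ is real-valued and so, via the Wirtinger identity $\overline{\partial_{z_j}\widehat{K}} = \partial_{\overline{z_j}}\widehat{K}$ recorded before Proposition \ref{rig}, conjugation commutes through the tangent vector in the expected way. Assembling these pieces yields the claimed expression.

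The remaining point is the smoothness of $\varphi, \psi$ on the image $\widehat{K}(X) \subseteq (0,+\infty)$. I would get it by specializing the formula to carefully chosen pairs $(u,v)$ at a fixed $x$: picking $u, v$ with $u\widehat{K} = v\widehat{K} = 0$ (possible when $n \geq 1$) isolates $\varphi(\widehat{K}(x))$ as a smooth function of $x$, and then a generic choice recovers $\psi(\widehat{K}(x))$ the same way, so Theorem \ref{rm1}(ii) delivers the smoothness. I expect this last smoothness step to be the only subtlety, since the rest is a routine substitution once the dictionary between Wirtinger derivatives of $\widehat{K}$ and inner products in $F^*$ is in place.
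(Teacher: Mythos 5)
Your proposal is essentially the paper's own proof: apply Theorem \ref{rm1} to the unitary-invariant form $\sigma$ on $F^{*}\backslash\left\{0\right\}$, identify $D\kappa_{x}u$ with the derivative point evaluation $u\kappa$, and substitute via Corollary \ref{dpe2}. The one place you diverge is the final smoothness step, and there your device is both unnecessary and shaky: the paper gets smoothness of $\varphi,\psi$ on all of $\left(0,+\8\right)$ directly from Theorem \ref{rm1}(ii) applied to the smooth metric $\sigma$ on $F^{*}\backslash\left\{0\right\}$, whereas trying to read it off from $\sigma^{K}$ can at best give information on the image $\widehat{K}\left(X\right)$, and your choice of $u,v$ with $u\widehat{K}=v\widehat{K}=0$ fails for $n=1$ at points where $d\widehat{K}\ne 0$. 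Drop that paragraph and invoke \ref{rm1}(ii) instead.
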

\begin{proof}
Let $G=F^{*}\backslash\left\{0\right\}$, and let $\sigma:G\times F^{*}\times F^{*}\to\C$ be a smooth unitary-invariant function, such that $\sigma_{g}$ is conjugate-symmetric sesquilinear on $F^{*}$, for every $g\in G$. By virtue of Theorem \ref{rm1}, there are smooth real functions $\varphi,\psi$, such that $$\sigma_{g}\left(f,h\right)=\varphi\left(\|g\|^{2}\right)\left<f,h\right>+\psi\left(\|g\|^{2}\right)\left<f,g\right>\left<g,h\right>,$$ for $g\in G$, and $f,h\in F^{*}$. For tangent vectors $u,v$ at $x$ by definition\\ $\sigma^{K}_{x}\left(u,v\right)=\sigma_{\kappa\left(x\right)}\left(D\kappa_{x}u,D\kappa_{x}v\right)$, where $D\kappa_{x}$ is the differential of $\kappa$ at $x$. Since $X$ is a domain in $\C^{n}$ and we have identified the tangent space to $F^{*}$ with $F^{*}$, $D\kappa_{x}$ is just the "Jacobi matrix" of $\kappa$ at $x$, i.e. if $u=\sum\limits_{i=1}^{n}a_{i}\frac{\partial}{\partial z_{i}}\left|_{x}\right. $, then $D\kappa_{x}u=\sum\limits_{i=1}^{n}a_{i}\frac{\partial\kappa}{\partial z_{i}}\left(x\right) =u\kappa$. \footnote{In general, $D\kappa_{x}u$ is an element of the tangent space of $F^{*}$ at $\kappa\left(x\right)$ such that $\left(D\kappa_{x}u\right)\phi=u\left(\phi\circ\kappa\right)$, for any smooth scalar function $\phi$ on $G$. Consider $\phi=f^{**}$, which is a linear (and so smooth) function on $F^{*}$. Then $\left(D\kappa_{x}u\right)f^{**}=u\left(f^{**}\circ\kappa\right)=uf=\left<f,u\kappa\right>$. Thus, via identification of the tangent space of $F^{*}$ with $F^{*}$ and $f^{**}$ with $f$ we get that $D\kappa_{x}u=u\kappa$.} Hence, from the preceding corollary
\begin{eqnarray*}
\sigma^{K}_{x}\left(u,v\right)&=&\sigma_{\kappa\left(x\right)}\left(u\kappa,v\kappa\right)=\varphi\left(\|\kappa\left(x\right)\|^{2}\right)\left<u\kappa,v\kappa\right>+\psi\left(\|\kappa\left(x\right)\|^{2}\right)\left<u\kappa,\kappa\left(x\right)\right>\left<\kappa\left(x\right),v\kappa\right>\\
&=&\varphi\left(\widehat{K}\left(x\right)\right)u\otimes\overline{v}\widehat{K}+\psi\left(\widehat{K}\left(x\right)\right)u\widehat{K}\overline{v}\widehat{K}.
\end{eqnarray*}
\end{proof}

\begin{remark}In the local coordinates the metric is expressed as $$\sum\limits_{i,j=1}^{n}\left[\varphi\circ\widehat{K}\frac{\partial^{2}}{dz_{i}d\overline{z}_{j}}\widehat{K}+\psi\circ\widehat{K}\frac{\partial}{dz_{i}}\widehat{K}\frac{\partial}{d\overline{z}_{j}}\widehat{K}\right]dz_{i}\otimes d\overline{z}_{j}.$$\end{remark}

\begin{remark}\label{pd}
It follows from part (i) of Theorem \ref{rm1} that if $\varphi\left(r\right)> 0$ and $\varphi\left(r\right)+r\psi\left(r\right)> 0$ for every $r>0$ then $\sigma$ is positive definite. However, this condition is not necessary. In particular, if $\varphi\left(r\right)> 0$ and $\varphi\left(r\right)+r\psi\left(r\right)\ge 0$ for every $r>0$, then $\sigma$ is positive definite, \textbf{unless} there are $x\in X$ and $u\in T_{x}$ such that $u\kappa=\kappa\left(x\right)$, or equivalently, $K\left(\cdot,x\right)\in\spa \left\{\frac{\partial}{\partial\overline{w_{1}}}K\left(\cdot,x\right),...,\frac{\partial}{\partial\overline{w_{n}}}K\left(\cdot,x\right)\right\}$. Indeed, the only way $\sigma^{K}_{x}\left(v,v\right)=0$ is when $\|v\kappa\|^{2}=\frac{\left|\left<v\kappa,\kappa\left(x\right)\right>\right|^{2}}{\|\kappa\left(x\right)\|^{2}}$, which can only happen when there is $\lambda\in\C$, such that $\lambda\kappa\left(x\right)=v\kappa$. Thus, either $v=0$, or $u\kappa=\kappa\left(x\right)$, where $u=\lambda^{-1}v$.

However, this phenomenon usually does not occur to the spaces of interest.
\qed\end{remark}

\begin{remark} If $\varphi$ and $\psi$ are sufficiently smooth and $\psi=\varphi'$, then $\sigma^{K}$ is Kaehler. Using the chain rule one can verify that its potential is $\chi\circ\widehat{K}$, where $\chi'=\varphi$. If $\chi$ can be extended to a holomorphic function in a neighborhood of $\left(0,+\8\right)\subset\C$, then by Calabi rigidity (see \cite{cal}) we have that $\sigma^{K}=\sigma^{L}$ if and only if there is a holomorphic function $\theta$, such that $\chi\left(K\left(x,y\right)\right)=\chi\left(L\left(x,y\right)\right)+\theta\left(x\right)+\overline{\theta\left(y\right)}$, for every $x,y\in X$. \qed
\end{remark}

\begin{example}
Let us consider the pull-back of the most natural metric on a Hilbert space - the Euclidean metric. This metric corresponds to $\varphi\equiv 1$ and $\psi\equiv 0$. Then, $\sigma^{K}_{x}\left(u,u\right)=\|u\kappa\|^{2}=u\otimes\overline{u}\widehat{K}$, and so for any $f\in F$ we have that $$\left|uf\right|=\left|\left<f,u\kappa\right>\right|\le\|f\|\|u\kappa\|=\|f\|\sqrt{\sigma^{K}_{x}\left(u,u\right)}.$$ Thus, $f$ is a Lipschitz function on $X$ with respect to $\sigma^{K}$, with Lipschitz constant at most $\|f\|$.\qed
\end{example}

\begin{example}[\textbf{Bergman metric}] Let $X$ be a bounded domain in $\C^{n}$. The \emph{Bergman space} $A^{2} \left(X \right)$ is the space of square-integrable holomorphic functions, i.e. $$A^{2} \left(X \right)=\left\{f\in\Ho\left(X\right),\left\|f\right\|_{2}=\left(\int _{X}\left|f \right|^{2} d\mu  \right)^{\frac{1}{2} }<+\8 \right\},$$ where $\mu$ is the Lebesgue measure on $X$. It turns out that this space is a RKHS; its kernel $K$ is called the \emph{Bergman kernel} of $X$. The \emph{Bergman metric} of $X$ is the pull-back $\tilde{\sigma}^{K}$ of the Fubini-Study metric $\tilde{\sigma}$ (see Example \ref{fs}) as described above; its potential is $\log\circ\widehat{K}$.

If $x=\left[x_{1},x_{2},...,x_{n}\right]\in X$ and $u=\sum\limits_{i=1}^{n}a_{i}\frac{\partial}{\partial z_{i}}\left|_{x}\right.\ne 0 $, then $f$ defined by $f\left(z_{1},...,z_{n}\right)=\overline{a_{1}}\left(z_{1}-x_{1}\right)+...+\overline{a_{n}}\left(z_{n}-x_{n}\right)$ belongs to $A^{2} \left(X \right)$ and $f\left(x\right)=0$, while \\ $uf=\left|a_{1}\right|^{2}+...+\left|a_{n}\right|^{2}\ne 0$, and so $\kappa\left(x\right)\ne u\kappa$. Thus, by virtue of Remark \ref{pd}, the Bergman metric is positive definite.\qed
\end{example}

Let $K$ be a sesqui-holomorphic non-degenerate positive semi-definite kernel on $X$. Consider the pull-backs $\delta_{1}^{K}$ and $\delta_{2}^{K}$ of $\delta_{1}$ and $\delta_{2}$ (see Example \ref{fs}) through $\kappa$. It is easy to see that $$\delta_{1}^{K}\left(x,y\right)=\sqrt{1-\frac{\left|K\left(x,y\right)\right|^{2}}{K\left(x,x\right)K\left(y,y\right)}},~\delta_{2}^{K}\left(x,y\right)=\sqrt{2-2\frac{\left|K\left(x,y\right)\right|}{\sqrt{K\left(x,x\right)K\left(y,y\right)}}},$$
for any $x,y\in X$. Clearly, if there is no linearly dependant point evaluations, then both $\delta_{1}^{K}$ and $\delta_{2}^{K}$ are distances on $X$. Since the length with respect to $\tilde{\sigma}$, $\delta_{1}$ and $\delta_{2}$ coincide, the same is true for $\tilde{\sigma}^{K}$, $\delta_{1}^{K}$ and $\delta_{2}^{K}$. This makes $\delta_{1}^{K}$ and $\delta_{2}^{K}$ useful, since they are given explicitly, while the actual distance generated by $\tilde{\sigma}^{K}$ can be difficult to calculate. Let us establish the Calabi rigidity for $\tilde{\sigma}^{K}$, and also for $\delta_{1}^{K}$ and $\delta_{2}^{K}$. From Theorem \ref{rrig} we derive the following result.

\begin{corollary}\label{caldelt} Let $K$ and $L$ be sesqui-holomorphic non-degenerate positive semi-definite kernels on $X$. The following are equivalent:
\item[(i)] $K$ and $L$ are rescallings;
\item[(ii)] $\tilde{\sigma}^{K}=\tilde{\sigma}^{L}$;
\item[(iii)] $\delta_{1}^{K}=\delta_{1}^{L}$;
\item[(iv)] $\delta_{2}^{K}=\delta_{2}^{L}$.
\end{corollary}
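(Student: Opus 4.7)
The plan is to reduce each of the equivalences directly to Theorem \ref{rrig}, which is purpose-built for comparing sesqui-holomorphic kernels up to rescalling. I would dispense with (iii) $\Leftrightarrow$ (iv) first, since it is immediate: both equalities are equivalent to the pointwise identity $\tfrac{|K(x,y)|}{\sqrt{K(x,x)K(y,y)}} = \tfrac{|L(x,y)|}{\sqrt{L(x,x)L(y,y)}}$ on $X\times X$, because all the quantities involved are non-negative and $t\mapsto t^2$ is a bijection on $[0,+\infty)$.

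For (i) $\Leftrightarrow$ (iii), I would rewrite $\delta_1^K = \delta_1^L$ as the pointwise identity
$$\frac{|K(x,y)|^2}{K(x,x)K(y,y)} = \frac{|L(x,y)|^2}{L(x,x)L(y,y)}, \qquad x,y\in X.$$
The forward direction then follows from the implication (iii) $\Rightarrow$ (i) of Theorem \ref{rrig}, since a global pointwise identity certainly satisfies the existential hypothesis of that clause. The converse is a one-line verification: substituting $L = \omega\otimes\overline{\omega}K$, the factors $|\omega(x)|^2$ and $|\omega(y)|^2$ cancel between numerator and denominator.

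For (i) $\Leftrightarrow$ (ii), the key step is to identify $\tilde\sigma^K$ with the complex Hessian of $\log\widehat K$. Since the Fubini-Study metric corresponds to $\varphi(r)=1/r$ and $\psi(r)=-1/r^2$ (Example \ref{fs}), the coordinate formula in the remark after Theorem \ref{main} collapses to
$$\tilde\sigma^K = \sum_{j,k=1}^{n} \frac{\partial^{2}\log\widehat K}{\partial z_{j}\partial\overline{z_{k}}}\, dz_{j}\otimes d\overline{z_{k}}.$$
With this in hand, $\tilde\sigma^K=\tilde\sigma^L$ on $X$ is a pointwise identity of complex Hessians, hence also an identity on any open set, so Theorem \ref{rrig} (ii) $\Rightarrow$ (i) produces the rescalling. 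Conversely, if $L=\omega\otimes\overline{\omega}K$, then $\widehat L=|\omega|^2\widehat K$ with $\omega$ holomorphic and nowhere-vanishing (by Remark \ref{rss} together with non-degeneracy of $\widehat L$), so $\log|\omega|^2$ is pluriharmonic globally and its $\partial\overline\partial$ vanishes, delivering the Hessian equality.

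The hard part will be conceptual rather than technical: one must recognise that Theorem \ref{rrig} is doing the real work, packaging both the Calabi-type K\"ahler rigidity (condition (ii)) and the pointwise ``angle'' rigidity (condition (iii)) in exactly the form needed. The only genuine new input is the Kahler-potential identification $\tilde\sigma^K = \partial\overline\partial\log\widehat K$, which follows by plugging the Fubini-Study data into the already-established coordinate expression. There are no delicate local-versus-global issues to navigate, because Theorem \ref{rrig} already upgrades agreement on an open set to a global rescalling.
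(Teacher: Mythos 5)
Your proposal is correct and follows essentially the same route as the paper, which states the corollary as a direct consequence of Theorem \ref{rrig} without spelling out the details you supply. In particular, your identification of $\tilde{\sigma}^{K}$ with the complex Hessian of $\log\widehat{K}$ is exactly the intended link to condition (ii) of that theorem (the paper itself records that $\log\circ\widehat{K}$ is the potential of $\tilde{\sigma}^{K}$), and the reductions of (iii) and (iv) to the pointwise equality of $\frac{\left|K\left(x,y\right)\right|^{2}}{K\left(x,x\right)K\left(y,y\right)}$ are the obvious ones.
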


\textbf{Automorphism-invariance.} The metrics $\tilde{\sigma}^{K}$, $\delta_{1}^{K}$ and $\delta_{2}^{K}$ are useful in studying Bergman spaces. The matching of $\tilde{\sigma}^{K}$ and the Bergman space is precisely the fact that they generate an automorphism-invariant metric on the domain; such metric may be viewed as natural, as it only depends on the complex structure, and not on the way it is located in $\C^{n}$. This motivates the following question.

\begin{question}
Let $X$ be a domain in $\C^{n}$, let $K$ be a sesqui-holomorphic non-degenerate positive semi-definite kernel and let $F$ be the corresponding RKHS. When is it possible to find a unitary-invariant Hermitean metric $\sigma$ on $F^{*}$ such that $\sigma^{K}$ is automorphism-invariant on $X$?
\end{question}

This question is of the most interest when $Aut\left(X\right)$ is "large", i.e. when $X$ is at least homogeneous.

Let us introduce an important class of operators on function spaces. Let $F$ be a RKHS over $X$ and let $\Phi:X\to X$ be such that $f\circ\Phi\in F$, for every $f\in F$. Then the operator $C_{\Phi}:F\to F$ defined by $\left[C_{\Phi}f\right]\left(x\right)=f\left(\Phi\left(x\right)\right)$ is called a \emph{composition operator} operator with \emph{symbol} $\Phi$. Obviously $C_{\Phi}$ is linear; by the Closed Graph Theorem it is continuous. Its adjoint $C_{\Phi}^{*}:F^{*}\to F^{*}$ "commutes" with $\kappa$, i.e. $C^{*}_{\Phi}\kappa\left(x\right)=\kappa\left(\Phi\left(x\right)\right)$, and so $C^{*}_{\Phi}$ can be viewed as a "linear extension" of $\Phi$. Also note that if $\Phi$ is a bijection and both $C_{\Phi}$ and $C_{\Phi^{-1}}$ are defined, then they are inverses of each other.

Assume that we were able to find a unitary-invariant metric $\sigma$ on $F^{*}$, which is invariant with respect to $C^{*}_{\Phi}$, for every automorphism $\Phi$ of $X$. Clearly, then $\sigma^{K}$ is invariant with respect to all of the automorphisms and our goal is accomplished. However, the following result shows that this can only happen in the trivial case.

\begin{proposition}\label{no}
Let $X$ be a weakly symmetric domain in $\C^{n}$ and let $F$ be a RKHS of holomorphic functions on $X$. Assume that there is a unitary-invariant metric $\sigma$ on $F^{*}$, which is invariant with respect to $C^{*}_{\Phi}$, for every automorphism $\Phi$ of $X$. Then $F$ consists of constant functions.
\end{proposition}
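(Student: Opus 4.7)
The plan is to exploit part (iv) of Theorem \ref{rm1}, which says that any operator preserving a unitary-invariant $\sigma$ must be a congruency, i.e.\ a scalar multiple of a unitary. So under our hypothesis, for every automorphism $\Phi$ of $X$ there is a positive scalar $\lambda_{\Phi}$ such that $C^{*}_{\Phi}$ is $\lambda_{\Phi}$ times a unitary on $F^{*}$. Applying this to the distinguished vectors $\kappa\left(x\right)$ and using $C^{*}_{\Phi}\kappa\left(x\right)=\kappa\left(\Phi\left(x\right)\right)$, we obtain the transformation rule
$$\widehat{K}\left(\Phi\left(x\right)\right)=\|\kappa\left(\Phi\left(x\right)\right)\|^{2}=\lambda_{\Phi}^{2}\|\kappa\left(x\right)\|^{2}=\lambda_{\Phi}^{2}\widehat{K}\left(x\right),$$
valid for every $x\in X$ and every $\Phi\in Aut\left(X\right)$.

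Now I would invoke weak symmetry. Given arbitrary $x,y\in X$, pick $\Phi\in Aut\left(X\right)$ with $\Phi\left(x\right)=y$ and $\Phi\left(y\right)=x$. The rule above yields $\widehat{K}\left(y\right)=\lambda_{\Phi}^{2}\widehat{K}\left(x\right)$ and simultaneously $\widehat{K}\left(x\right)=\lambda_{\Phi}^{2}\widehat{K}\left(y\right)$. If $\widehat{K}\left(x\right)\ne 0$ multiplying forces $\lambda_{\Phi}^{4}=1$, hence $\lambda_{\Phi}=1$ (as it is positive) and $\widehat{K}\left(x\right)=\widehat{K}\left(y\right)$; if $\widehat{K}\left(x\right)=0$ the same identities give $\widehat{K}\left(y\right)=0$. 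Either way $\widehat{K}$ is constant on $X$.

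With $\widehat{K}$ constant, every mixed second partial $u\otimes\overline{u}\widehat{K}$ vanishes at every $x\in X$ and for every $u\in T_{x}$. By Corollary \ref{dpe2} this is exactly $\|u\kappa\|^{2}$, so $u\kappa=0$ in $F^{*}$. Consequently, for any $f\in F$ and any tangent vector $u$,
$$uf=\left<f,u\kappa\right>=0,$$
so $f$ has vanishing differential throughout the connected domain $X$, i.e.\ $f$ is constant.

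I do not foresee a genuine obstacle: once Theorem \ref{rm1}(iv) is applied, the argument is essentially a rigidity check against the swap automorphisms. The one place requiring a little care is making sure the conclusion $u\kappa=0$ really implies triviality of $F$; this rests on identifying the dual pairing via the evaluation map as in Proposition \ref{dpe1} and on the connectedness built into the word ``domain.''
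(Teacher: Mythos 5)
Your argument is correct, and it diverges from the paper's at the key lemma. Both proofs begin identically: Theorem \ref{rm1}(iv) forces each $C^{*}_{\Phi}$ to be a congruency, and weak symmetry is exploited through the swap automorphism $\Phi\left(x\right)=y$, $\Phi\left(y\right)=x$. But the paper (Lemma \ref{no1}) extracts from the congruency the \emph{off-diagonal} identity $\overline{K\left(y,x\right)}=K\left(x,y\right)=\left\|C^{*}_{\Phi}\right\|^{2}K\left(y,x\right)$, deduces that $K_{x}^{2}$ is real-valued, and then uses holomorphy of $K_{x}\in F$ (open mapping theorem) to conclude $K_{x}$ is a real constant, whence $K$ is constant and $F=\overline{\spa}\left\{K_{x}\right\}$ consists of constants. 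You instead use only the \emph{diagonal} information $\widehat{K}\circ\Phi=\lambda_{\Phi}^{2}\widehat{K}$, pin down $\lambda_{\Phi}=1$ by composing the two swap identities, conclude $\widehat{K}$ is constant, and then finish through the derivative point-evaluation machinery: $\|u\kappa\|^{2}=u\otimes\overline{u}\widehat{K}=0$ by Corollary \ref{dpe2}, so $uf=\left<f,u\kappa\right>=0$ and $f$ is constant on the connected domain $X$. Your route is slightly more economical in what it extracts from the congruency hypothesis (only norms of point evaluations, not the full inner-product transformation and conjugate symmetry), at the cost of leaning on the sesqui-holomorphic derivative formulas of Section \ref{pull}; the paper's route is more elementary in that respect but needs the holomorphy of the individual kernel functions $K_{x}$. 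The only points deserving explicit mention are that $\lambda_{\Phi}>0$ (a zero congruency is excluded by the injectivity built into the definition of invariance, and in any case would give $\kappa\equiv 0$ and the trivial space), and that Corollary \ref{dpe2} applies because $F$ consisting of holomorphic functions makes $\kappa$ holomorphic by Theorem \ref{shrkhs}; both are minor.
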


The fact follows immediately from part (iv) of Theorem \ref{rm1} and the following lemma.

\begin{lemma}\label{no1}
Let $X$ be a weakly symmetric domain in $\C^{n}$ and let $F$ be a RKHS of holomorphic functions on $X$ such that $C^{*}_{\Phi}$ is a congruency for every $\Phi\in Aut\left(X\right)$. Then $F$ is the space of constant functions.
\end{lemma}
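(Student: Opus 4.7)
The plan is to exploit the intertwining identity $C^{*}_{\Phi}\kappa(x) = \kappa(\Phi(x))$ together with weak symmetry to force the reproducing kernel $K$ to be constant on $X\times X$, whence $F$ collapses to the one-dimensional space of constant functions.

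First I would translate the congruency assumption into kernel language. For each $\Phi \in Aut(X)$, write $C^{*}_{\Phi} = \mu_{\Phi} U_{\Phi}$ with $U_{\Phi}$ an isometry on $F^{*}$ and $\mu_{\Phi} \geq 0$. Applying both sides to $\kappa(x)$ and taking norms gives
$$\widehat{K}(\Phi(x)) \;=\; \|\kappa(\Phi(x))\|^{2} \;=\; \mu_{\Phi}^{2}\,\widehat{K}(x)$$
for every $x \in X$ and every $\Phi \in Aut(X)$.

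Second, I would invoke weak symmetry. For an arbitrary pair $x, y \in X$, pick $\Phi \in Aut(X)$ with $\Phi(x) = y$ and $\Phi(y) = x$. The two identities $\widehat{K}(y) = \mu_{\Phi}^{2}\widehat{K}(x)$ and $\widehat{K}(x) = \mu_{\Phi}^{2}\widehat{K}(y)$ together imply either $\widehat{K}(x) = \widehat{K}(y) = 0$, or $\mu_{\Phi} = 1$ and $\widehat{K}(x) = \widehat{K}(y) > 0$. Assuming $F \neq \{0\}$ (the conclusion being vacuous otherwise), there is $y_{0}$ with $\widehat{K}(y_{0}) > 0$, and the dichotomy forces $\widehat{K}(x) = \widehat{K}(y_{0})$ for every $x \in X$. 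Hence $\widehat{K}$ is a positive constant $c$ on $X$.

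Third, I would apply Proposition \ref{rig}: both $K$ and the constant function $c$ on $X\times X$ are sesqui-holomorphic with identical diagonal, so $K \equiv c$. Then
$$\|\kappa(x) - \kappa(y)\|^{2} \;=\; K(x,x) - K(x,y) - K(y,x) + K(y,y) \;=\; 0,$$
so $\kappa$ is constant on $X$. Therefore $F^{*} = \overline{\spa\,\kappa(X)}$ is one-dimensional, $F$ is one-dimensional, and every $f \in F$ satisfies $f(x) = \left<f, \kappa(x)\right> = \left<f, \kappa(y)\right> = f(y)$, i.e., $f$ is constant. The main obstacle is cleanly handling the possibility that $\widehat{K}$ vanishes at some points; the swap supplied by weak symmetry dispatches it at once, after which the argument is a short chain of identities using only the key intertwining relation $C^{*}_{\Phi}\kappa = \kappa\circ\Phi$ and the rigidity of sesqui-holomorphic functions.
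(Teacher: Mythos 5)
Your proof is correct, but it reaches the conclusion by a different route than the paper. The paper's proof never touches the diagonal $\widehat{K}$: from the congruency relation and the swap $\Phi(x)=y$, $\Phi(y)=x$ it gets $\overline{K\left(y,x\right)}=\left\|C^{*}_{\Phi}\right\|^{2}K\left(y,x\right)$, hence $K\left(y,x\right)^{2}\in\R$ for all $y$, and then uses the elementary fact that a holomorphic function with real-valued square is constant to conclude that each $K_{x}$, and then $K$ itself, is a real constant. You instead extract only the norm identity $\widehat{K}\left(\Phi\left(x\right)\right)=\mu_{\Phi}^{2}\widehat{K}\left(x\right)$, use the swap to force $\widehat{K}$ to be a positive constant (your dichotomy handling the possible zero set is correct, and in fact $\widehat{K}\equiv 0$ already forces $F=\left\{0\right\}$), and then invoke the diagonal rigidity of sesqui-holomorphic functions (Proposition \ref{rig}, applicable since $F\subset\Ho\left(X\right)$ makes $K$ sesqui-holomorphic by Theorem \ref{shrkhs}) to get $K\equiv c$, after which $\|\kappa\left(x\right)-\kappa\left(y\right)\|=0$ finishes the argument. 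The trade-off: your version needs only the moduli $\|\kappa\left(x\right)\|$ rather than the phases of the off-diagonal values, at the price of importing the (standard but nontrivial) rigidity Proposition \ref{rig}; the paper's version stays entirely elementary but has to argue about the off-diagonal function $K_{x}$ directly. Both are valid, and both use weak symmetry in exactly the same way.
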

\begin{proof}
Fix $x\in X$. Let $y\in X$ and let $\Phi\in Aut\left(X\right)$ be such that $\Phi\left(x\right)=y$ and $\Phi\left(y\right)=x$. Since $C^{*}_{\Phi}$ is a congruency we have that $$\overline{K\left(y,x\right)}=K\left(x,y\right)=K\left(\Phi\left(y\right),\Phi\left(x\right)\right)=\left\|C^{*}_{\Phi}\right\|^{2}K\left(y,x\right),$$ and so $K\left(y,x\right)^{2}\in\R$. Since $y$ was chosen arbitrarily we conclude that $K^{2}_{x}$ is a real-valued function. Combining this assertion with the fact that $K_{x}\in F$ is holomorphic, we conclude that $K_{x}$ is a real constant. Since $x$ was also chosen arbitrarily, we get that $K$ is a real constant, and so $F$ is the space of constant functions.
\end{proof}

It is easy to see that if $\Phi\in Aut\left(X\right)$, then both $C_{\Phi}$ and $C_{\Phi^{-1}}$ are congruencies if and only if both $C^{*}_{\Phi}$ and $C^{*}_{\Phi^{-1}}$ are congruencies. Thus as a byproduct we obtain the following result.

\begin{corollary}
Let $X$ be a weakly symmetric domain in $\C^{n}$. Then the only RKHS of holomorphic functions on $X$ such that $C_{\Phi}$ is a congruency for every $\Phi\in Aut\left(X\right)$, is the space of constant functions.
\end{corollary}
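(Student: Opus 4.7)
The plan is to reduce the statement to Lemma \ref{no1} via the adjoint duality remarked upon in the text just before the corollary. Since $Aut\left(X\right)$ is a group, the hypothesis that $C_{\Phi}$ is a congruency for every $\Phi\in Aut\left(X\right)$ applies equally to $\Phi^{-1}$, so both $C_{\Phi}$ and $C_{\Phi^{-1}}$ are congruencies; the preliminary material already observed that these operators are mutual inverses of each other.

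The main (and only non-formal) step is to justify the parenthetical ``easy to see'' equivalence: that when both $C_{\Phi}$ and $C_{\Phi^{-1}}$ are congruencies, their adjoints are congruencies as well. I would write $C_{\Phi}=\alpha U$ and $C_{\Phi^{-1}}=\beta V$ with $U,V$ isometries of $F$ and $\alpha,\beta\in\Cp$. The identity $C_{\Phi}C_{\Phi^{-1}}=I$ then becomes $\alpha\beta UV=I$, so $U$ admits a right inverse and is therefore surjective; combined with the injectivity of $U$ (as an isometry) this makes $U$ bijective, and an isometric bijection on a Hilbert space is automatically unitary. Hence $C_{\Phi}$ is a scalar multiple of a \emph{unitary} operator on $F$, and the same argument promotes $C_{\Phi^{-1}}$ to a scalar multiple of a unitary.

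Consequently $C_{\Phi}^{*}=\bar\alpha\,U^{*}$ is a scalar multiple of the unitary $U^{*}$, hence a congruency, and likewise $C_{\Phi^{-1}}^{*}$ is a congruency. Thus the hypothesis of Lemma \ref{no1} is in force for every $\Phi\in Aut\left(X\right)$, so the lemma immediately yields that $F$ consists of constant functions. I do not anticipate any genuine obstacle beyond the unitary-promotion argument above; the remainder is a direct invocation of Lemma \ref{no1}.
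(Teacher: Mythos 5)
Your proposal is correct and follows essentially the same route as the paper: the paper derives the corollary from Lemma \ref{no1} via the remark that both $C_{\Phi}$ and $C_{\Phi^{-1}}$ being congruencies is equivalent to both adjoints being congruencies, a step it labels ``easy to see'' and which you have filled in correctly (the right-inverse argument promoting the isometric part to a unitary is exactly the needed justification, since the adjoint of a non-surjective isometry would only be a co-isometry).
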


Proposition \ref{no} tells that no unitary-invariant Hermitean metric on the dual of a RKHS over $X$ can be invariant with respect to the action of $Aut\left(X\right)$ via the adjoints of the composition operators. However, the pull-back of such metric still can be automorphism-invariant (see more details in the next section).

\section{Projectively Invariant Kernels}\label{proj}

\textbf{Projective invariance.} Let us extend our collection of operators on function spaces beyond the composition operators. Let $F$ be a RKHS over a set $X$ and let $\omega: X\to\C$ be such that $\omega f\in F$, for every $f\in F$. Then a \emph{multiplication operator} with \emph{weight} $\omega$ is defined by $\left[M_{\omega}f\right]\left(x\right)=\omega\left(x\right)f\left(x\right)$.

Finally, if $\Phi:X\to X$ and $\omega: X\to\C$ are such that $\omega\cdot \left(f\circ\Phi\right)\in F$, for every $f\in F$, then a \emph{weighted composition operator} operator with \emph{symbol} $\Phi$ and \emph{weight} $\omega$ is defined by $\left[W_{\omega,\Phi}f\right]\left(x\right)=\omega\left(x\right)f\left(\Phi\left(x\right)\right)$. Note that $W_{1,\Phi}=C_{\Phi}$ and $W_{\omega,Id}=M_{\omega}$. Again, these operators are linear and continuous once they are defined (which is usually not trivial to determine for given $F$, $\Phi$ and $\omega$). We also have that $M^{*}_{\omega}\kappa\left(x\right)=\omega\left(x\right)\kappa\left(x\right)$ and $W^{*}_{\omega,\Phi}\kappa\left(x\right)=\omega\left(x\right)\kappa\left(\Phi\left(x\right)\right)$.

We will need the following result (we use the approach from \cite{le} and \cite{nz}).

\begin{proposition}\label{wu}
Let $F$ be a RKHS with a non-degenerate kernel $K$ and let $\Phi:X\to X$ be a surjection. Then $W_{\omega,\Phi}$ is unitary if and only if $\omega$ rescales $K\circ\Phi$ to $K$.
\end{proposition}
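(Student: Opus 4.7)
The proof rests on the formula $W^{*}_{\omega,\Phi}\kappa(x) = \omega(x)\kappa(\Phi(x))$ recorded just before the statement. Since a bounded operator on a Hilbert space is unitary if and only if its adjoint is, it suffices to analyse $W^{*}_{\omega,\Phi}$ rather than $W_{\omega,\Phi}$ directly.

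For the forward direction, if $W_{\omega,\Phi}$ is unitary then $W^{*}_{\omega,\Phi}$ preserves the inner product on $F^{*}$. Evaluating this at the pair $\kappa(x),\kappa(y)$ and using $K(x,y)=\langle \kappa(x),\kappa(y)\rangle_{F^{*}}$, I get
$$K(x,y) \;=\; \omega(x)\overline{\omega(y)}\,K(\Phi(x),\Phi(y)),$$
which is precisely $K=(\omega\otimes\overline{\omega})(K\circ\Phi)$, so $\omega$ rescales $K\circ\Phi$ to $K$.

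For the converse, assume the rescaling identity. Taking $y=x$ yields $\widehat{K}(x)=|\omega(x)|^{2}\widehat{K}(\Phi(x))$, and non-degeneracy of $K$ forces $\omega$ to be nowhere zero. Define a map $T$ on $\spa\kappa(X)$ by $T\kappa(x)=\omega(x)\kappa(\Phi(x))$, extended linearly. A direct computation using the rescaling identity shows that for any finite combinations,
$$\bigl\langle \textstyle\sum_{i} c_{i}T\kappa(x_{i}),\sum_{j}d_{j}T\kappa(y_{j})\bigr\rangle_{F^{*}} \;=\; \bigl\langle \textstyle\sum_{i} c_{i}\kappa(x_{i}),\sum_{j}d_{j}\kappa(y_{j})\bigr\rangle_{F^{*}},$$
so $T$ is well-defined and isometric on its domain, and it extends by continuity to a linear isometry $T\colon F^{*}\to F^{*}$. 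Surjectivity of $\Phi$ together with $\omega(x)\neq 0$ gives $\kappa(y)=\omega(x)^{-1}T\kappa(x)$ whenever $\Phi(x)=y$, so the range of $T$ contains $\kappa(X)$, whose span is dense in $F^{*}$. Hence $T$ is unitary. Letting $S$ be the unitary on $F$ whose adjoint is $T$, one computes for every $f\in F$,
$$(Sf)(x)\;=\;\langle f,T\kappa(x)\rangle \;=\; \omega(x)\,f(\Phi(x)),$$
which simultaneously shows $\omega\cdot(f\circ\Phi)\in F$ and that $S=W_{\omega,\Phi}$ is unitary.

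The main conceptual obstacle is the sufficiency direction: the rescaling identity is a statement about kernel values only, whereas the definition of $W_{\omega,\Phi}$ demands that $\omega\cdot(f\circ\Phi)\in F$ for every $f\in F$, which is not an obvious consequence. The trick is to build the unitary on the dual $F^{*}$ first, using the explicit action on the spanning set $\kappa(X)$, and then recover $W_{\omega,\Phi}$ itself by dualising, so that membership in $F$ comes out as a byproduct rather than a hypothesis.
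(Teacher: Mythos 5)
Your proof is correct and follows essentially the same route as the paper: both arguments hinge on the identity $W^{*}_{\omega,\Phi}\kappa(x)=\omega(x)\kappa(\Phi(x))$, translate the rescaling condition into preservation of inner products on the spanning set $\kappa(X)$ (the paper packages this via Moore--Aronszajn as a co-isometry, you build the isometry on $\spa\kappa(X)$ directly and extend by continuity), and use surjectivity of $\Phi$ together with $\omega\neq 0$ to get density of the range. Your closing remark about recovering membership $\omega\cdot(f\circ\Phi)\in F$ as a byproduct of dualising is exactly the point of the paper's lemma as well.
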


The proof follows from combining the two parts of the following lemma.

\begin{lemma} Let $F$ be a RKHS with a non-degenerate kernel $K$.
\item[(i)] Let $\lambda:X\to F^{*}$. There is a co-isometry $T:F\to F$ and $\omega:X\to\Cp$ such that $\omega\lambda=T^{*}\kappa$ if and only if $L\left(x,y\right)=\left<\lambda\left(x\right),\lambda\left(y\right)\right>_{F^{*}}$ is a rescalling of $K$.
\item[(ii)] If $\omega:X\to\Cp$ and $\Phi:X\to X$ is a surjection, then $W_{\omega,\Phi}$ is injective as long as it is defined.
\end{lemma}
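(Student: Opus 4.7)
The plan is to prove the two parts independently; the workhorse for part (i) is the density of $\spa\,\kappa(X)$ in $F^{*}$, which reduces isometric extension questions to a single inner-product identity on generators.

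For the forward direction of part (i), if $T$ is a co-isometry then $T^{*}$ is an isometry on $F^{*}$, so
$$\omega(x)\overline{\omega(y)}\,L(x,y)=\langle\omega(x)\lambda(x),\omega(y)\lambda(y)\rangle_{F^{*}}=\langle T^{*}\kappa(x),T^{*}\kappa(y)\rangle_{F^{*}}=K(x,y),$$
which says precisely that $\omega$ rescales $L$ to $K$. For the converse, assume $K=\omega\otimes\overline{\omega}\,L$; non-degeneracy of $K$ forces $|\omega|^{2}=\widehat{K}/\widehat{L}>0$ on $X$, so $\omega$ takes values in $\Cp$ (and $L$ is automatically non-degenerate as well). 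I would then define $S$ on $\spa\,\kappa(X)$ by linearly extending $S\kappa(x)=\omega(x)\lambda(x)$; the identity
$$\Big\|\sum_{i} c_{i}\omega(x_{i})\lambda(x_{i})\Big\|^{2}=\sum_{i,j} c_{i}\overline{c_{j}}\,\omega(x_{i})\overline{\omega(x_{j})}\,L(x_{i},x_{j})=\sum_{i,j} c_{i}\overline{c_{j}}\,K(x_{i},x_{j})=\Big\|\sum_{i} c_{i}\kappa(x_{i})\Big\|^{2}$$
shows simultaneously that $S$ is well-defined on the image of the span map and that it is isometric there. By density, $S$ extends uniquely to an isometry on $F^{*}$, and its adjoint is the desired co-isometry $T$, now satisfying $T^{*}\kappa=\omega\lambda$ on all of $X$.

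Part (ii) is immediate. If $W_{\omega,\Phi}f=0$, then $\omega(x)f(\Phi(x))=0$ for every $x\in X$; since $\omega$ is nowhere zero we get $f\circ\Phi\equiv 0$, and surjectivity of $\Phi$ yields $f\equiv 0$.

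The main subtlety I anticipate lies in the converse of part (i), namely in ensuring that $S$ is well-defined on $\spa\,\kappa(X)$: \emph{a priori} one must check that any linear relation among $\kappa(x_{1}),\dots,\kappa(x_{n})$ descends to the same linear relation among $\omega(x_{1})\lambda(x_{1}),\dots,\omega(x_{n})\lambda(x_{n})$. The displayed norm identity handles this point in one stroke — norm-zero combinations are preserved — and the same calculation also delivers isometry, so in practice the whole converse collapses into this single computation, with the rest being standard density and duality.
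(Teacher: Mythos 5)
Your proof is correct. Part (i) follows the paper's route almost verbatim: the necessity computation is identical, and your explicit well-definedness-plus-isometry check on $\spa\,\kappa\left(X\right)$ followed by extension by density is exactly the content of the uniqueness clause of the Moore--Aronszajn theorem that the paper invokes at that point, so you have merely unpacked the citation. (One pedantic point you handle correctly in passing: the statement asks that $L$ be a rescaling of $K$, while your $\omega$ rescales $L$ to $K$; since both kernels are non-degenerate the relation is symmetric, as the paper notes in Section \ref{prel}.) In part (ii) you argue differently from the paper: you prove injectivity of $W_{\omega,\Phi}$ directly on functions ($\omega$ nowhere zero and $\Phi$ surjective force $f\equiv 0$, and in a RKHS the zero function is the zero vector), whereas the paper dualizes and shows that $W_{\omega,\Phi}^{*}$ has dense range because $\kappa\left(X\right)\subset\spa\,W_{\omega,\Phi}^{*}\kappa\left(X\right)$. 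Your version is shorter and uses only that $F$ is a space of functions; the paper's version is the one that generalizes when one wants information about the adjoint acting on point evaluations, but for the statement as given your argument is entirely adequate.
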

\begin{proof}
(i): Necessity:
\begin{eqnarray*}
K\left(x,y\right)&=&\left<\kappa\left(x\right),\kappa\left(y\right)\right>_{F^{*}}=\left<T^{*}\kappa\left(x\right),T^{*}\kappa\left(y\right)\right>_{F^{*}}\\
&=&\left<\omega\left(x\right)\lambda\left(x\right),\omega\left(y\right)\lambda\left(y\right)\right>_{F^{*}}=\omega\left(x\right)\overline{\omega\left(y\right)}L\left(x,y\right).
\end{eqnarray*}

Sufficiency: Assume that $\omega:X\to\Cp$ rescales $L$ to $K$. Let $\iota\left(x\right)=\omega\left(x\right)\lambda\left(x\right)$. By Moore-Aronszajn Theorem, there is a unitary operator $S$ from $F^{*}$ onto\\ $\overline{\spa\iota\left(X\right)}\subset F^{*}$ such that $\iota\left(x\right)=S\kappa\left(x\right)$. Then $T=S^{*}$ is the required co-isometry.

(ii): The statement is equivalent to the fact that $W=W^{*}_{\omega,\Phi}\kappa\left(x\right)$ has a dense range. For every $x\in X$ there is $y\in x$, such that $\Phi\left(y\right)=x$, and so \\ $\kappa\left(x\right)=\frac{1}{\omega\left(y\right)}W\kappa\left(y\right)$. Thus the span of $W\kappa\left(X\right)$ contains $\kappa\left(X\right)$ and since the latter set spans $F^{*}$, we conclude that $W$ has a dense image.
\end{proof}

Let $X$ be a domain in $\C^{n}$ and let $L$ be a sesqui-holomorphic non-degenerate function on $X$. We will call $L$ \emph{projectively invariant} if $L\circ\Phi$ is a rescalling of $L$ for every $\Phi\in Aut\left(X\right)$. A \emph{multiplier} of a projectively invariant function $L$ is a function $\omega^{L}:Aut\left(X\right)\times X\to \Cp$, such that $\omega_{\Phi}^{L}=\omega^{L}\left(\Phi,\cdot\right)$ rescales $L\circ\Phi$ to $L$, for every $\Phi\in Aut\left(X\right)$. It is easy to see that if both $L\circ\Phi$ and $L\circ\Psi$ are rescallings of $L$, for $\Phi,\Psi\in Aut\left(X\right)$, then $L\circ\Phi\circ\Psi$ is also a rescalling of $L$, as well as $L\circ\Phi^{-1}$. Moreover, using part (iii) of Theorem \ref{rrig} and continuity of $L$ we can see that the automorphisms $\Phi$, such that $L\circ\Phi$ is a rescalling of $L$, form a pointwise closed subset of $Aut\left(X\right)$. Thus, in order to ensure that $L$ is projectively invariant, $L\circ\Phi$ has to be a rescalling of $L$ for every $\Phi$ in some collection of automorphisms of $X$ that generates a pointwise dense subgroup of $Aut\left(X\right)$. For example, if $X$ is a symmetric domain, then this collection can be chosen to be the union of the set of all holomorphic involutions on $X$ and the isotropy group at some $x\in X$.

It is also easy to see that if $L$ and $M$ are projectively invariant, then so is $LM$; from part (iii) of Theorem \ref{rrig}, if $L^{s}$ is well-defined for some $s\in\R$, it is also projectively invariant. A pointwise limit of projectively invariant functions is projectively invariant, although it may loose sesqui-holomorphicity; if the limit is with respect to the compact-open topology, sesqui-holomorphicity is preserved due to Weierstrass theorem. Finally, observe that any kernel of the form $L=h\otimes\overline{h}$, $h\in\Ho\left(X\right)$, is trivially projectively invariant.

It is also worth mentioning that if $L$ is a projectively invariant function on a homogeneous domain, and there is $y\in X$ such that $L\left(\cdot,y\right)$ does not vanish, then $L$ does not vanish. If $X$ is also bounded, then $L^{s}$ is well-defined since bounded homogeneous domains are simply-connected (see \cite[Corollary 1.10]{xu}).

Assume that $K$ is a sesqui-holomorphic non-degenerate positive semi-definite kernel on $X$ and $\tilde{\sigma}$ is the Fubini-Study metric. The map $\Phi\in Aut\left(X\right)$ is an isometry with respect to $\tilde{\sigma}^{K}$ if and only if the pull-back of $\tilde{\sigma}$ via $\kappa$ and via $\kappa\circ\Phi$ is the same, which is equivalent to $\tilde{\sigma}^{K}=\tilde{\sigma}^{K\circ\Phi}$. Due to Corollary \ref{caldelt}, the last condition means that $K$ and $K\circ\Phi$ are rescallings, i.e. there is a function $\omega^{K}_{\Phi}:X\to\Cp$, that rescales $K$ to $K\circ\Phi$, which by the proposition above happens if and only if $W_{\omega^{K}_{\Phi},\Phi}$ is unitary. Let us summarize in the following two results.

\begin{theorem}\label{prig}
The following are equivalent:
\item[(i)] $K$ is projectively invariant;
\item[(ii)] For every $\Phi\in Aut\left(X\right)$ there is a function $\omega_{\Phi}^{K}:X\to\C$, such that $W_{\omega^{K}_{\Phi},\Phi}$ is unitary;
\item[(iii)] Each (any) of $\tilde{\sigma}^{K}$, $\delta_{1}^{K}$ and $\delta_{2}^{K}$ is automorphism-invariant;
\item[(iv)] For every $\Phi\in Aut\left(X\right)$ there is $y\in X$ and an open $U\subset X$ such that for any $x\in U$ we have
$$\frac{\left|K\left(\Phi\left(x\right),\Phi\left(y\right)\right)\right|^{2}}{K\left(\Phi\left(x\right),\Phi\left(x\right)\right)K\left(\Phi\left(y\right),\Phi\left(y\right)\right)}=\frac{\left|K\left(x,y\right)\right|^{2}}{K\left(x,x\right)K\left(y,y\right)}.$$
\end{theorem}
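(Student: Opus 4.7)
The proof will be essentially an assembly of previously established results; the hard work has been done in Theorem \ref{rrig} (the Calabi-type rigidity for sesqui-holomorphic functions), Proposition \ref{wu} (unitarity of weighted composition operators), and Corollary \ref{caldelt} (the Calabi rigidity for the pulled-back metric). My plan is to establish the equivalences by passing through condition (i) as a hub, since each of (ii), (iii), (iv) translates naturally into the rescaling statement defining projective invariance.

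For (i) $\Leftrightarrow$ (ii), I would fix $\Phi\in Aut(X)$ and observe that $K\circ\Phi$ is a rescalling of $K$ precisely when there is $\omega^K_\Phi:X\to\Cp$ rescaling $K\circ\Phi$ to $K$. Proposition \ref{wu} (applied to $\Phi$, which is a surjection of $X$ onto itself since it is an automorphism) states that the latter is equivalent to $W_{\omega^K_\Phi,\Phi}$ being unitary. A minor point worth checking is that any rescaling function between non-vanishing sesqui-holomorphic kernels must itself be nowhere zero, so we genuinely obtain $\omega^K_\Phi:X\to\Cp$ as required by the statement of (ii); this follows from $|\omega^K_\Phi|^2=\widehat{K}/\widehat{K\circ\Phi}$ together with non-degeneracy of $K$.

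For (i) $\Leftrightarrow$ (iii), I would use that $\Phi\in Aut(X)$ is an isometry of $\tilde{\sigma}^{K}$ exactly when $\tilde{\sigma}^{K}=\tilde{\sigma}^{K\circ\Phi}$, since the pull-back via $\kappa\circ\Phi$ coincides with the pull-back of $\Phi^{*}\tilde{\sigma}^{K}$ via $\kappa$. By Corollary \ref{caldelt}, this is equivalent to $K$ and $K\circ\Phi$ being rescallings, which is projective invariance. The same Corollary \ref{caldelt} gives the identical equivalence with $\delta_1^K$ and with $\delta_2^K$, so the three metric-invariance conditions are mutually equivalent and each characterizes (i). Applying this for every $\Phi\in Aut(X)$ yields the full automorphism-invariance statement.

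For (i) $\Leftrightarrow$ (iv), this is immediate from Theorem \ref{rrig}(i)$\Leftrightarrow$(iii) applied to the pair of non-degenerate sesqui-holomorphic functions $K$ and $K\circ\Phi$: the quotient $|K(\Phi(x),\Phi(y))|^2/(K(\Phi(x),\Phi(x))K(\Phi(y),\Phi(y)))$ is exactly $|(K\circ\Phi)(x,y)|^2/((K\circ\Phi)(x,x)(K\circ\Phi)(y,y))$, so condition (iv) is verbatim part (iii) of Theorem \ref{rrig} for this pair. The main (mild) subtlety is bookkeeping: I expect no genuine obstacle beyond checking that all hypotheses of the cited results apply, in particular that the rescaling function stays in $\Cp$ and that $K\circ\Phi$ is sesqui-holomorphic and non-degenerate, both of which are immediate from $\Phi$ being a biholomorphism.
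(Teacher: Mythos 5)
Your proposal is correct and follows essentially the same route as the paper: the paper also uses (i) as the hub, deriving (i)$\Leftrightarrow$(ii) from Proposition \ref{wu}, (i)$\Leftrightarrow$(iii) from Corollary \ref{caldelt} together with the observation that $\Phi$ is an isometry of $\tilde{\sigma}^{K}$ iff $\tilde{\sigma}^{K}=\tilde{\sigma}^{K\circ\Phi}$, and (i)$\Leftrightarrow$(iv) directly from Theorem \ref{rrig}. Your extra check that the rescaling function lands in $\Cp$ is a harmless (and correct) refinement of what the paper leaves implicit.
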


\begin{proposition}
The product of projectively invariant kernels is projectively invariant. Any rescalling of a projectively invariant kernel is projectively invariant. A power of a projectively invariant kernel is projectively invariant, provided that it is well-defined and positive semi-definite. A compact-open limit of projectively invariant kernels is projectively invariant.
\end{proposition}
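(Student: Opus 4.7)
The plan is to handle each of the four assertions as a corollary of the characterisation of rescallings in Theorem \ref{rrig}(iii) together with the elementary facts about rescallings collected in the Preliminaries. In each case I reduce the projective invariance to the invariance of the quantity
$$R_{L}(x,y) := \frac{\left|L(x,y)\right|^{2}}{L(x,x)\, L(y,y)}$$
under $(x,y)\mapsto (\Phi(x),\Phi(y))$, since by Theorem \ref{rrig}(iii) this equality on some open set is equivalent to $L$ and $L\circ\Phi$ being rescallings.

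For the product assertion, if $\omega^{L}_{\Phi}$ rescales $L$ to $L\circ\Phi$ and $\omega^{M}_{\Phi}$ rescales $M$ to $M\circ\Phi$, then the product $\omega^{L}_{\Phi}\omega^{M}_{\Phi}$ rescales $LM$ to $(L\circ\Phi)(M\circ\Phi) = (LM)\circ\Phi$, using the closure of the rescalling relation under products noted at the end of the first subsection of Section \ref{prel}. For a rescalling $L' = \omega\otimes\overline{\omega}\cdot L$ of a projectively invariant $L$, the direct computation $L'\circ\Phi = (\omega\circ\Phi)\otimes\overline{(\omega\circ\Phi)}\cdot (L\circ\Phi)$ shows that $L\circ\Phi$ and $L'\circ\Phi$ are rescallings; combining this with $L\circ\Phi \sim L \sim L'$ and the transitivity of the rescalling equivalence relation yields $L'\circ\Phi \sim L'$. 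For powers, assume $L^{s}$ is well-defined and positive semi-definite; then $(L\circ\Phi)^{s} = L^{s}\circ\Phi$, and since $L(x,x)>0$ and $|L^{s}|=|L|^{s}$ on the chosen branch,
$$R_{L^{s}}(x,y) = R_{L}(x,y)^{s} = R_{L}(\Phi(x),\Phi(y))^{s} = R_{L^{s}}(\Phi(x),\Phi(y)),$$
where the middle equality comes from projective invariance of $L$ via Theorem \ref{rrig}(iii); the same theorem applied in the other direction delivers the projective invariance of $L^{s}$.

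For the compact-open limit, let $L_{n}\to L$ in the compact-open topology with each $L_{n}$ projectively invariant and non-degenerate, and with $L$ non-degenerate. By Hartogs' theorem each $L_{n}'$ is holomorphic on $X\times X^{*}$; by the Weierstrass theorem the compact-open limit $L'$ is then holomorphic, so $L$ is sesqui-holomorphic. Fix $\Phi\in Aut(X)$. By Theorem \ref{rrig}(iii), $R_{L_{n}}(x,y) = R_{L_{n}}(\Phi(x),\Phi(y))$ for every $x,y\in X$. Non-degeneracy of $L$ together with compact-open convergence guarantees that $L_{n}(x,x)$ and $L_{n}(y,y)$ stay locally bounded away from zero for large $n$, so both sides of this identity converge and we obtain $R_{L}(x,y) = R_{L}(\Phi(x),\Phi(y))$; a final application of Theorem \ref{rrig}(iii) completes the argument. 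The main technical point that I expect to require a little care is precisely this uniform local non-vanishing of $L_{n}(x,x)$ needed to pass to the limit in a ratio, which is why the non-degeneracy of the limit $L$ must be built into the hypothesis.
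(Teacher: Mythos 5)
Your proof is correct and follows essentially the same route as the paper, which justifies these claims in the paragraph preceding the proposition: products via the closure of the rescalling relation under multiplication, powers and limits via part (iii) of Theorem \ref{rrig} together with the Weierstrass theorem for preservation of sesqui-holomorphicity. The only cosmetic difference is that the paper handles rescallings by observing that $h\otimes\overline{h}$ is trivially projectively invariant and invoking the product assertion, whereas you verify $L'\circ\Phi\sim L'$ by a direct computation and transitivity — these are interchangeable.
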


Now the term "projective invariance" is justified, because RKHS's with a projective invariant kernel admit a projective representation of $Aut\left(X\right)$ (see \cite{pasc}). The automorphism-invariance of the Bergman metric follows from Theorem \ref{prig}.

\begin{example}
Let $X$ be a bounded domain in $\C^{n}$ and let $\Phi$ be an automorphism $X$. Let
$J_{\Phi } \left(x\right)$ be the complex Jacobian of $\Phi$ at $x$; then $\left|J_{\Phi } \left(x\right)\right|^{2} $ is the real Jacobian. Hence, for each $g\in A^{2} \left(X\right)$, using the change of variables in the integration, we have
\begin{eqnarray*}
\left\| g\right\|^{2} _{A^{2} \left(X\right)} &=&\int_{X}\left|g\left(y\right)\right|^{2} d \mu\left(y\right)=\int _{X}\left|g\circ \Phi \left(x\right)\right|^{2} \left|J_{\Phi } \left(x\right)\right|^{2} d\mu\left(x\right) \\&=&\left\| J_{\Phi } \left( g\circ \Phi \right) \right\|^{2} _{A^{2} \left(X\right)}=\left\| W_{J_{\Phi },\Phi}f \right\|^{2} _{A^{2} \left(X\right)} ,
\end{eqnarray*}
thus $W_{J_{\Phi },\Phi}$ is an isometry of $A^{2} \left(X\right)$. This isometry is invertible, namely $W_{J_{\Phi^{-1} },\Phi^{-1}}=W_{J_{\Phi },\Phi}^{-1}$. Hence, $W_{J_{\Phi },\Phi}$ is a unitary on $F=A^{2} \left(X\right)$. Since $\Phi$ was chosen arbitrarily, we get that the Bergman kernel is projectively invariant, and so the Bergman metric is automorphism-invariant.\qed
\end{example}
\

\textbf{Multipliers.} We would like to conclude the section with discussing multipliers. Let $L$ be a projectively invariant sesqui-holomorphic function. Then its multiplier $\omega^{L}$ is "almost unique" due to Remark \ref{rss}: it is determined up to multiplication with a function from $Aut\left(X\right)$ into $\T$, and so we will write "$\approx$" for equalities modulo this action. Let us mention some of the properties of $\omega^{L}$. First of all, $\left|\omega^{L}_{\Phi}\right|^{2}=\frac{\widehat{L}}{\widehat{L}\circ\Phi}$; in particular if $K$ is the Bergman kernel, then $\left|J_{\Phi } \left(x\right)\right|^{2}=\frac{K\left(x,x\right)}{K\left(\Phi\left(x\right),\Phi\left(x\right)\right)}$, for any $x\in X$ and $\Phi\in Aut\left(X\right)$. If $L$ and $M$ are projectively invariant sesqui-holomorphic functions on $X$ and $\Phi,\Psi\in Aut\left(X\right)$, then $$\omega^{L}_{\Phi\circ\Psi}\approx\omega^{L}_{\Psi}\omega^{L}_{\Phi}\circ\Psi\mbox{ (the cocycle property), ~and~ }\omega^{LM}\approx\omega^{L}\omega^{M}. $$ In particular, in the case when $M=h\otimes\overline{h}$, we have that $\omega^{M}_{\Phi}\approx\frac{h}{h\circ\Phi}$, and so $\omega^{LM}_{\Phi}\approx\frac{h}{h\circ\Phi}\omega^{L}_{\Phi}$.

The following result appears in \cite{bb} in the context of Bergman-like spaces.

\begin{proposition}\label{mrig}
Let $X$ be a homogeneous domain in $\C^{n}$ and let $L$ and $M$ be non-degenerate sesqui-holomorphic projectively invariant functions on $X$. If $\omega^{L}\approx\omega^{M}$, then there is $c>0$ such that $M=cL$.
\end{proposition}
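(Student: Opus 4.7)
The plan is to reduce the conclusion to the diagonal-rigidity statement of Proposition \ref{rig}, using homogeneity to turn the hypothesis $\omega^{L}\approx\omega^{M}$ into a pointwise identity on the diagonals. The key observation is that if $\omega^{L}\approx\omega^{M}$, then for every $\Phi\in Aut(X)$ the functions $\omega^{L}_{\Phi}$ and $\omega^{M}_{\Phi}$ differ by a constant unimodular multiplier, so in particular $|\omega^{L}_{\Phi}|=|\omega^{M}_{\Phi}|$ pointwise on $X$.

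Carrying this out, I would first use the formula recorded just before the proposition, namely $|\omega^{L}_{\Phi}|^{2}=\widehat{L}/(\widehat{L}\circ\Phi)$ (which is immediate from the fact that $\omega^{L}_{\Phi}$ rescales $L\circ\Phi$ to $L$, upon passing to diagonals), together with its analogue for $M$. The identity $|\omega^{L}_{\Phi}|=|\omega^{M}_{\Phi}|$ then rearranges to
$$\frac{\widehat{M}}{\widehat{L}}\circ\Phi=\frac{\widehat{M}}{\widehat{L}}$$
for every $\Phi\in Aut(X)$. Because $X$ is homogeneous and $Aut(X)$ acts transitively, this forces $\widehat{M}/\widehat{L}$ to be a constant $c$ on $X$; non-degeneracy of $L$ and $M$ gives $\widehat{L},\widehat{M}>0$, hence $c>0$. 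The resulting equality $\widehat{M}=c\widehat{L}=\widehat{cL}$ shows that the sesqui-holomorphic functions $M$ and $cL$ share the same diagonal, so Proposition \ref{rig} yields $M=cL$.

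I do not anticipate any real obstacle: the argument is essentially a two-line consequence of diagonal rigidity once homogeneity has been invoked. The only small point requiring care is the passage from the equivalence $\approx$ (equality modulo a $\T$-valued function on $Aut(X)$) to pointwise equality of $|\omega^{L}_{\Phi}|$ and $|\omega^{M}_{\Phi}|$ on $X$, which is immediate from the definition of $\approx$ given in Remark \ref{rss}. It is worth noting that nothing beyond transitivity of $Aut(X)$ is actually used, so the proposition genuinely captures the homogeneous setting in the cleanest possible way.
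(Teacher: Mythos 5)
Your argument is correct and is essentially the paper's own proof: both use the identity $|\omega^{L}_{\Phi}|^{2}=\widehat{L}/(\widehat{L}\circ\Phi)$ together with transitivity of $Aut(X)$ to show that $\widehat{M}/\widehat{L}$ is constant, and then conclude by diagonal rigidity. Your phrasing via the $Aut(X)$-invariance of $\widehat{M}/\widehat{L}$ and the explicit appeal to Proposition \ref{rig} is, if anything, a slightly cleaner way of stating the final step.
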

\begin{proof}
Fix some $y\in X$. For every $x\in X$ there is $\Phi\in Aut\left(X\right)$, such that $\Phi\left(y\right)=x$, and so $$\frac{\widehat{L}\left(y\right)}{\widehat{L}\left(x\right)}=\frac{\widehat{L}\left(y\right)}{\widehat{L}\circ\Phi\left(y\right)}=\left|\omega^{L}_{\Phi}\left(y\right)\right|^{2}=\left|\omega^{M}_{\Phi}\left(y\right)\right|^{2}=\frac{\widehat{M}\left(y\right)}{\widehat{M}\circ\Phi\left(y\right)}=\frac{\widehat{M}\left(y\right)}{\widehat{M}\left(x\right)}.$$ Hence $\widehat{M}\left(x\right)=\frac{\widehat{M}\left(y\right)}{\widehat{L}\left(y\right)}\widehat{L}\left(x\right)$ and, by virtue of Theorem \ref{rrig}, the result follows for $c=\frac{\widehat{M}\left(y\right)}{\widehat{L}\left(y\right)}$.
\end{proof}

Of course, if one of the two functions is positive semi-definite, then so is the other one.

\begin{example}[Weighted Bergman Space]
Let $X$ be a bounded domain in $\C^{n}$. Analogously to the Bergman Space we can define a \emph{weighted Bergman Space} $A^{2}_{w}$, i.e. the space of all holomorphic functions, square-integrable with respect to the measure $wd\mu$, where $w$ is a positive continuous $\mu$-integrable function on $X$. It turns out that this space is also a RKHS. Let $K^{w}$ be its kernel. It was shown in \cite{kol} that if $\Phi\in Aut\left(X\right)$, then there is a corresponding weight if and only if $\frac{w\circ\Phi}{w}$ is a square of a modulus of holomorphic function; moreover $\left|\omega^{K^{w}}_{\Phi}\right|^{2}=\frac{w\circ\Phi}{w}\left|J_{\Phi}\right|^{2}$. In particular, if $w=\left|h\right|^{2}$, where $h$ is holomorphic, then $A^{2}_{w}=\left\{f\left|hf\in A^{2}\left(X\right)\right.\right\}$, with the corresponding norm. Then $K^{w}=\frac{1}{h}\otimes\frac{1}{\overline{h}}K$, and the formula for the multiplier agrees with the one in the beginning of the subsection.

Assume that $X$ is homogeneous and fix $y\in X$. If $x=\Phi\left(y\right)$ we have that $$\frac{\widehat{K^{w}}\left(y\right)}{\widehat{K^{w}}\left(x\right)}=\left|\omega^{K^{w}}_{\Phi}\left(y\right)\right|^{2}=\frac{w\circ\Phi\left(y\right)}{w\left(y\right)}\left|J_{\Phi}\left(y\right)\right|^{2}=\frac{w\left(x\right)}{w\left(y\right)}\frac{\widehat{K}\left(y\right)}{\widehat{K}\left(x\right)},$$
where $K$ is the usual Bergman Kernel. Thus $\widehat{K^{w}}=c\frac{\widehat{K}}{w}$, for some $c>0$. This equality gives certain restriction on $w$ and at the same time "almost" determines $K^{w}$. In particular, if we know that there is a sesqui-holomorphic function $L$ such that $w=\widehat{L}$, then $K^{w}=cKL^{-1}$. For example, if $w=\widehat{K}^{-\alpha}$ is well-defined and integrable for $\alpha\in\R$, then $K^{w}=cK^{\alpha+1}$, and so by the observation before this example, $K^{\alpha+1}$ is a positive semi-definite kernel. Moreover, it is the unique kernel up to a positive multiple, with multiplier $J_{\Phi}^{\frac{\alpha+1}{2}}$. In fact, we have arrived at the same conclusions as in \cite{arazy}, using only elementary arguments (for the developments of these considerations see the introduction of \cite{bb}).\qed
\end{example}

\section{Supplements and questions}\label{sup}

\textbf{Remarks about Proposition \ref{mrig}.} The reader may have an impression that the assumption of homogeneity of $X$ in the proposition is too strong. Indeed, we could only assume that there is $x$ such that the orbit $Y= Aut\left(X\right)\cdot x$ is somewhere dense, i.e. $\Int\overline{Y}\ne\varnothing$. However, this condition implies weak symmetry, at least if $X$ is bounded. In order to prove this fact we first show that $Y$ is closed. Assume that $\left\{y_{k}\right\}_{k=1}^{\8}\subset Y$ converges to $y\in X$ and let $\Phi_{k}\in Aut\left(X\right)$ be such that $y_{k}=\Phi_{k}\left(x\right)$. Since $Aut\left(X\right)$ is relatively compact in $\Ho\left(X\right)^{n}=\Ho\left(X,\C^{n}\right)$ from Montel's theorem, we may assume that $\Phi_{k}\to\Phi\in\Ho\left(X\right)^{n}$. Of course then $\Phi\left(x\right)=y\in X$, but then from Cartan's theorem (see \cite[Theorem 1.10.7]{krantz}), $\Phi\in Aut\left(X\right)$, and so $y\in Y$. Now if $Y$ is a neighborhood of some point, from the (topological) homogeneity of $Y$ in $X$ it follows that $Y$ is a neighborhood of every of its points. Thus, $Y$ is open, and since $X$ is connected, its only closed and open subset is $X$ itself. Thus $Y=X$, and so $X$ is homogeneous.

Nevertheless, in the proof of the proposition we used very little information about $\omega$. Hence, it is natural to ask if a similar rigidity holds if we drop the sesqui-holomorphicity (analytic condition) and add positive semi-definiteness (algebraic/geometric condition).

\begin{question}\label{qq}
Let $X$ be a set (topological space) and let $K$ and $L$ be non-degenerate positive semi-definite (continuous) kernels on $X$. Assume that a (topological) group acts on $X$ transitively (and continuously) and both $K$ and $L$ are projectively invariant with respect to this action. Is it true that if they have equal multipliers, then there is $c>0$ such that $L=cK$?
\end{question}

Let $L$ and $M$ be as in Proposition \ref{mrig}. According to this proposition, if we can find a $h\in\Ho\left(X\right)$, such that $\omega^{M}_{\Phi}\approx\frac{h}{h\circ\Phi}\omega^{L}_{\Phi}$, then there is $c>0$, such that $M=ch\otimes\overline{h}L$. The question is if we can characterize $L$ and $M$ for which such $h$ exists. The quotient $\omega=\frac{\omega^{M}}{\omega^{L}}$ satisfies the cocycle property $\omega_{\Phi\circ\Psi}\approx\omega_{\Psi}\omega_{\Phi}\circ\Psi$, but for $\frac{h}{h\circ\Phi}$ the sign "$\approx$" turns into the identity, and so we need to find a characterization of the cocycles, for which this strict condition holds. For such "strict" cocycles one can show that if there is $y\in X$ such that $\omega\left(y\right)=1$, for any $\Phi$, which is an isotropy in $y$, then there is some function $h:X\to\Cp$, such that $\omega=\frac{h}{h\circ\Phi}$. However, the problem remains if $h$ is automatically holomorphic. Thus we can state a question, which lies in the realm of the elementary complex analysis.

\begin{question}
Let $X$ be a homogeneous domain in $\C^{n}$ and let $h:X\to\Cp$ be such that $\frac{h}{h\circ\Phi}$ is holomorphic, for any $\Phi\in Aut\left(X\right)$. Does it follow that $h$ is holomorphic?
\end{question}
\

\textbf{Remarks about Lemma \ref{no1}.} It is possible to show that a bounded domain is weakly symmetric (symmetric) whenever there is $x\in X$ and a somewhere dense set $Y\subset X$ such that for any $y\in Y$ there is (an involution) $\Phi\in Aut\left(X\right)$ such that $\Phi\left(x\right)=y$ and $\Phi\left(y\right)=x$. The proofs are very similar to the one in the beginning of the preceding subsection, but one has to employ the fact that the evaluation $\left(\Phi,x\right)\to  \Phi\left(x\right)$ is continuous ($Aut\left(X\right)$ is a topological group) with respect to the compact-open topology. Thus, we cannot replace the weak symmetry assumption in the lemma with its "local" analogue.

Nevertheless, it would be interesting to generalize the lemma to homogeneous domains. In its proof we have observed that $C^{*}_{\Phi}$ is a congruency for every\\ $\Phi\in Aut\left(X\right)$ if and only if $K\left(\Phi\left(x\right),\Phi\left(y\right)\right)=\left\|C^{*}_{\Phi}\right\|^{2}K\left(x,y\right)$, for any $x,y\in X$ and $\Phi\in Aut\left(X\right)$. Define $\alpha:Aut\left(X\right)\to\R$ by $\alpha\left(\Phi\right)=2\log \left\|C^{*}_{\Phi}\right\|$. Clearly, if $C^{*}_{\Phi}$ is a congruency for every $\Phi\in Aut\left(X\right)$, then $\alpha$ is a homomorphism, and the equality $K\left(\Phi\left(x\right),\Phi\left(y\right)\right)=e^{\alpha\left(\Phi\right)}K\left(x,y\right)$ ensures that $\alpha$ is continuous on $Aut\left(X\right)$. If we furthermore assume that $X$ is bounded, then an isotropy group at any point of $X$ is compact (see \cite[Lemma 1.11]{xu}). Hence, $\alpha$ maps this group into a compact subgroup of $\R$. The only such subgroup is trivial, and so we get that any automorphism with a fixed point must induce a unitary composition operator. Then Lemma \ref{no1} will follow provided the answer to the following question (which is of its own interest) is affirmative.

\begin{question}
Let $X$ be a bounded homogeneous domain in $\C^{n}$ and let $F$ be a RKHS of holomorphic functions on $X$ with a kernel $K$ such that $C_{\Phi}$ is a unitary for every isotropy $\Phi$ at $x\in X$. Does it follow that $K_{x}$ is a constant?
\end{question}

Assume that $X$ is balanced, i.e. $0\in X$ and $e^{i\theta}X=X$, for any $\theta\in\R$. If $K$ is as in the question with $x=0$, then $K_{0}\left(e^{i\theta}z\right)=K_{0}\left(z\right)$, for any $\theta\in\R$. For $z=\left[z_{1},z_{2},...,z_{n}\right]\in \C^{n}$ and $I=\left[i_{1},i_{2},...,i_{n}\right]\in\N_{0}^{n}$ denote $z^{I}=z_{1}^{i_{1}}z_{2}^{i_{2}}...z_{n}^{i_{n}}$ and $\left|I\right|=i_{1}+i_{2}+...+i_{n}$. Consider the Taylor expansion
$$K\left(0,0\right)+\sum\limits_{\left|I\right|>0}a_{I}z^{I}=K_{0}\left(z\right)=K_{0}\left(e^{i\theta}z\right)=K\left(0,0\right)+\sum\limits_{\left|I\right|>0}a_{I}e^{i\theta\left|I\right|}z^{I},$$
and so $a_{I}=a_{I}e^{i\theta\left|I\right|}$, for any $\theta\in\R$. If $\theta\not\in\Q$, then $e^{i\theta\left|I\right|}\ne 1$, which forces $a_{I}=0$, whenever $\left|I\right|>0$. Thus $K_{0}\left(z\right)\equiv K\left(0,0\right)$.

Note that any bounded symmetric domain is biholomorphic to a balanced one.

Also note, that we have used the equality $K\left(\Phi\left(y\right),\Phi\left(z\right)\right)=K\left(y,z\right)$, for isotropies at $x$, only in the case when $z=x$, and we also haven't used the fact that $K$ is positive semi-definite. Consequently, there should be room for generalizations.\\

Finally let us ask if an abstract version of Lemma \ref{no1} holds.

\begin{question}
Let $X$ be a topological space and let $K$ be a separately continuous positive semi-definite non-degenerate kernel on $X$. Assume that $G$ is a group that acts transitively and continuously on $X$ and there is continuous homomorphism\\ $\alpha:G\to\R$, such that $K\left(g\cdot x,g\cdot y\right)=e^{\alpha\left(g\right)}K\left(x,y\right)$, for any $x,y\in X$ and $g\in G$. Does it follow that $K$ is a constant?
\end{question}
\

\textbf{Remarks on our setting.} Although the phase space of our functions almost everywhere in the text was a domain in $\C^{n}$, most of the results and the proofs are valid for complex manifolds. However, the examples with Bergman spaces and their variations depend on the Lebesgue measure, which is a concept not inherent to manifolds. We conclude this section with brief explanation of how to adapt Theorem \ref{main} for the case of real manifolds. First note, that we have to include symmetry of kernels in the Moore-Aronszajn theorem for reals, since there are non-symmetric positive semi-definite functions.

Let $X$ be a $\Co^{l}$ real manifold and let $F$ be a RKHS over $X$ with kernel $K$ such that the corresponding $\kappa$ is $\Co^{m}$-smooth, for $0<m\le l$. Analogously to the complex case, every function $f\in F$ is also $\Co^{m}$-smooth, since $f=f^{**}\circ\kappa$; from the chain rule we get that $u f=\left<f,u\kappa\right>$, for any tangent vector $u$ at $x\in X$ of order at most $m$. Note that if all elements of $F$ are $\Co^{m}$-smooth, then $\kappa$ is $\Co^{m-1}$-smooth (see \cite{ger}). Consequently, if $X$ is a $\Co^{\8}$ manifold, then $\kappa$ is $\Co^{\8}$ if and only if all elements of $F$ are also  $\Co^{\8}$. It is easy to see that for $u\in T_{x}$, $x\in X$ we have that $u\widehat{K}=2uK\left(\cdot,x\right)$, but there is no analogue of the second formula in Corollary \ref{dpe2}. In fact, the expression there is not even well-defined for the real case. However, we can still use Proposition \ref{dpe1} and state the real version of Theorem \ref{main}.

\begin{proposition}
Let $X$ be a smooth manifold and let $F$ be a RKHS over $X$ which consists of smooth functions. Assume that a Riemannian metric $\sigma^{K}$ on $X$ is a pull-back of a certain unitary-invariant metric on $F^{*}\backslash\left\{0\right\}$. Then there are smooth functions $\varphi,\psi:\left(0,+\8\right)\to \R$, such that for any $x\in X$ and $u,v\in T_{x}$ we have that
$$\sigma^{K}_{x}\left(u,v\right)=\varphi\left(\widehat{K}\left(x\right)\right)uv_{1}K+\frac{1}{4}\psi\left(\widehat{K}\left(x\right)\right)u\widehat{K}v\widehat{K},$$ where $v_{1}K$ is the application of $v$ to $K$ with respect to the first variable.
\end{proposition}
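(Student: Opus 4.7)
The plan is to mimic the proof of Theorem \ref{main}, with the small adjustments forced by the real setting. First, by the real analogue of Theorem \ref{rm1} (the argument in \cite{erz} is insensitive to the ground field once one replaces conjugate-symmetric sesquilinear by symmetric bilinear), any unitary-invariant metric $\sigma$ on $F^{*}\backslash\left\{0\right\}$ takes the form $\sigma_{g}\left(f,h\right)=\varphi\left(\|g\|^{2}\right)\left<f,h\right>+\psi\left(\|g\|^{2}\right)\left<f,g\right>\left<g,h\right>$ for some smooth $\varphi,\psi:\left(0,+\8\right)\to\R$. As in the footnote to the proof of Theorem \ref{main}, the identification of the tangent space to $F^{*}$ at $\kappa\left(x\right)$ with $F^{*}$ itself sends $D\kappa_{x}u$ to $u\kappa$, so
$$\sigma^{K}_{x}\left(u,v\right)=\sigma_{\kappa\left(x\right)}\left(u\kappa,v\kappa\right)=\varphi\left(\widehat{K}\left(x\right)\right)\left<u\kappa,v\kappa\right>+\psi\left(\widehat{K}\left(x\right)\right)\left<u\kappa,\kappa\left(x\right)\right>\left<\kappa\left(x\right),v\kappa\right>.$$

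Next I would evaluate each pairing in terms of $K$. Proposition \ref{dpe1}(i) is algebraic and passes to the real setting unchanged, yielding $\left<u\kappa,\kappa\left(z\right)\right>=uK\left(\cdot,z\right)$ and $\left[\ast u\kappa\right]\left(z\right)=uK\left(\cdot,z\right)$. Combined with the identity $u\widehat{K}=2uK\left(\cdot,x\right)$ recorded just before the statement, the cross-terms contribute $\left<u\kappa,\kappa\left(x\right)\right>\left<\kappa\left(x\right),v\kappa\right>=\tfrac{1}{4}\,u\widehat{K}\cdot v\widehat{K}$, accounting for the factor $\tfrac{1}{4}$ in the claim. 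For the remaining pairing, apply $v$ to the element $\ast u\kappa\in F$ and use Proposition \ref{dpe1}(i) a second time:
$$\left<u\kappa,v\kappa\right>_{F^{*}}=\left<\ast u\kappa,\ast v\kappa\right>_{F}=v\left(\ast u\kappa\right)=uv_{1}K,$$
interpreting $v_{1}K$ as the function $z\mapsto vK\left(\cdot,z\right)$ to which $u$ is then applied at $x$. Substitution yields the stated formula.

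The main obstacle is conceptual rather than computational, and is precisely the one flagged by the author before the statement. In the complex case, sesqui-holomorphicity forced all ``spurious'' mixed partials to vanish, so $\left<u\kappa,v\kappa\right>$ could be repackaged as the diagonal object $u\otimes\overline{v}\widehat{K}$ depending only on $\widehat{K}$. No such repackaging is available here: $uv_{1}K$ must genuinely be computed as a mixed partial of the two-variable kernel $K$, and symmetry of the resulting bilinear form in $u,v$ has to be read off from the symmetry of $K$ (via Schwarz together with $K\left(x,y\right)=K\left(y,x\right)$). Beyond this and the routine factor of two from the chain rule applied to $\widehat{K}=K\circ j_{X}$, the argument is a line-by-line transcription of the proof of Theorem \ref{main}.
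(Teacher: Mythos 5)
Your argument is correct and is exactly the adaptation the paper intends: the paper gives no separate proof of this proposition, merely remarking that one should transcribe the proof of Theorem \ref{main} using Proposition \ref{dpe1} in place of Corollary \ref{dpe2}, which is what you do, including the factor $\tfrac{1}{4}$ coming from $u\widehat{K}=2uK\left(\cdot,x\right)$ and the identification of $\left<u\kappa,v\kappa\right>$ with the mixed derivative $uv_{1}K$ via the symmetry of $K$.
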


\section{Acknowledgements}

The author wants to thank: his supervisor Nina Zorboska for extensive help with the preparation of this paper, and also Alexandre Eremenko and Robert Bryant from the website \href{http://mathoverflow.net/}{MathOverflow}.

\begin{bibsection}
\begin{biblist}
\bib{am}{book}{
   author={Agler, Jim},
   author={McCarthy, John E.},
   title={Pick interpolation and Hilbert function spaces},
   series={Graduate Studies in Mathematics},
   volume={44},
   publisher={American Mathematical Society, Providence, RI},
   date={2002},
   pages={xx+308},
}
\bib{arazy}{article}{
   author={Arazy, Jonathan},
   title={A survey of invariant Hilbert spaces of analytic functions on
   bounded symmetric domains},
   conference={
      title={Multivariable operator theory},
      address={Seattle, WA},
      date={1993},
   },
   book={
      series={Contemp. Math.},
      volume={185},
      publisher={Amer. Math. Soc., Providence, RI},
   },
   date={1995},
   pages={7--65},
}
\bib{arsw}{article}{
   author={Arcozzi, N.},
   author={Rochberg, R.},
   author={Sawyer, E.},
   author={Wick, B. D.},
   title={Distance functions for reproducing kernel Hilbert spaces},
   conference={
      title={Function spaces in modern analysis},
   },
   book={
      series={Contemp. Math.},
      volume={547},
      publisher={Amer. Math. Soc., Providence, RI},
   },
   date={2011},
   pages={25--53},
}
\bib{aron}{article}{
   author={Aronszajn, N.},
   title={Theory of reproducing kernels},
   journal={Trans. Amer. Math. Soc.},
   volume={68},
   date={1950},
   pages={337--404},
}
\bib{bbb}{article}{
   author={Beatrous, Frank, Jr.},
   author={Burbea, Jacob},
   title={Positive-definiteness and its applications to interpolation
   problems for holomorphic functions},
   journal={Trans. Amer. Math. Soc.},
   volume={284},
   date={1984},
   number={1},
   pages={247--270},
}
\bib{bb}{article}{
   author={Beatrous, Frank},
   author={Burbea, Jacob},
   title={Invariant quadratic forms on spaces of holomorphic functions},
   journal={Complex Var. Elliptic Equ.},
   volume={54},
   date={2009},
   number={11},
   pages={977--999},
}
\bib{berg}{book}{
   author={Bergman, Stefan},
   title={The kernel function and conformal mapping},
   edition={Second, revised edition},
   note={Mathematical Surveys, No. V},
   publisher={American Mathematical Society, Providence, R.I.},
   date={1970},
   pages={x+257},
}
\bib{erz}{article}{
   author={Bilokopytov, Eugene},
   title={Isometry invariant Finsler metrics on Hilbert Spaces},
   journal={Archivum Mathematicum},
   volume={65},
   date={2017},
   number={3},
   pages={141-153},
   note={\href{https://www.emis.de/journals/AM/17-3/index.html}{available electronically}},
}
\bib{erz2}{article}{
   author={Bilokopytov, Eugene},
   title={Principal Minor Assignment, Isometries of Hilbert Spaces, Volumes of Parallelepipeds and Rescaling of Sesqui-holomorphic Functions, submitted},
   journal={\href{https://arxiv.org/abs/1808.04072}{arXiv:1808.04072}},
   date={2018},
}
\bib{bm}{book}{
   author={Bochner, Salomon},
   author={Martin, William Ted},
   title={Several Complex Variables},
   series={Princeton Mathematical Series, vol. 10},
   publisher={Princeton University Press, Princeton, N. J.},
   date={1948},
   pages={ix+216},
}
\bib{cal}{article}{
   author={Calabi, Eugenio},
   title={Isometric imbedding of complex manifolds},
   journal={Ann. of Math. (2)},
   volume={58},
   date={1953},
   pages={1--23},
}
\bib{cd}{article}{
   author={Cowen, M. J.},
   author={Douglas, R. G.},
   title={Complex geometry and operator theory},
   journal={Acta Math.},
   volume={141},
   date={1978},
   number={3-4},
   pages={187--261},
}
\bib{gl}{article}{
   author={Garc\'\i a, A. G.},
   author={Littlejohn, L. L.},
   title={On analytic sampling theory},
   journal={J. Comput. Appl. Math.},
   volume={171},
   date={2004},
   number={1-2},
   pages={235--246},
}
\bib{ger}{article}{
   author={Gerlach, Eberhard},
   title={Some imbedding properties of Hilbert subspaces in topological
   vector spaces},
   language={English, with French summary},
   journal={Ann. Inst. Fourier (Grenoble)},
   volume={21},
   date={1971},
   number={3},
   pages={1--12},
}
\bib{kob}{article}{
   author={Kobayashi, Shoshichi},
   title={Geometry of bounded domains},
   journal={Trans. Amer. Math. Soc.},
   volume={92},
   date={1959},
   pages={267--290},
}
\bib{kol}{article}{
   author={Kolaski, Clinton J.},
   title={Surjective isometries of weighted Bergman spaces},
   journal={Proc. Amer. Math. Soc.},
   volume={105},
   date={1989},
   number={3},
   pages={652--657},
}
\bib{krantz}{book}{
   author={Krantz, Steven G.},
   title={Geometric analysis and function spaces},
   series={CBMS Regional Conference Series in Mathematics},
   volume={81},
   publisher={Published for the Conference Board of the Mathematical
   Sciences, Washington, DC; by the American Mathematical Society,
   Providence, RI},
   date={1993},
   pages={xii+202},
}
\bib{le}{article}{
   author={Le, Trieu},
   title={Self-adjoint, unitary, and normal weighted composition operators
   in several variables},
   journal={J. Math. Anal. Appl.},
   volume={395},
   date={2012},
   number={2},
   pages={596--607},
}
\bib{nik}{article}{
   author={Nikolski, Nikolai},
   title={Distance formulae and invariant subspaces, with an application to
   localization of zeros of the Riemann $\zeta$-function},
   language={English, with English and French summaries},
   journal={Ann. Inst. Fourier (Grenoble)},
   volume={45},
   date={1995},
   number={1},
   pages={143--159},
}
\bib{pasc}{book}{
   author={Parthasarathy, K. R.},
   author={Schmidt, K.},
   title={Positive definite kernels, continuous tensor products, and central
   limit theorems of probability theory},
   series={Lecture Notes in Mathematics, Vol. 272},
   publisher={Springer-Verlag, Berlin-New York},
   date={1972},
   pages={vi+107},
}
\bib{rochberg}{article}{
   author={Rochberg, R.},
   title={Is the Dirichlet Space a Quotient of $DA_{n}$?},
   journal={\href{https://arxiv.org/abs/1605.02700}{arXiv:1605.02700}},
   date={2016},
}
\bib{sk}{article}{
   author={Skwarczy\'nski, M.},
   title={The invariant distance in the theory of pseudoconformal
   transformations and the Lu Qi-keng conjecture},
   journal={Proc. Amer. Math. Soc.},
   volume={22},
   date={1969},
   pages={305--310},
}
\bib{xu}{book}{
   author={Xu, Yichao},
   title={Theory of complex homogeneous bounded domains},
   series={Mathematics and its Applications},
   volume={569},
   note={Translated and revised from the 2000 Chinese original},
   publisher={Science Press Beijing, Beijing; Kluwer Academic Publishers,
   Dordrecht},
   date={2005},
   pages={x+427},
}
\bib{nz}{article}{
   author={Zorboska, N.},
   title={Unitary and Normal Weighted Composition Operators on Reproducing Kernel Hilbert Spaces of Holomorphic Functions},
   journal={preprint},
   date={2017},
}
\end{biblist}
\end{bibsection}

\end{document}